\numberwithin{equation}{section} 
\newtheorem{theorem}{Theorem}[section]
\newaliascnt{lemma}{theorem}  
\newtheorem{lemma}[lemma]{Lemma}  
\newaliascnt{definition}{theorem}  
\newtheorem{definition}[definition]{Definition}  
\newaliascnt{corollary}{theorem}  
\newtheorem{corollary}[corollary]{Corollary}  
\newaliascnt{proposition}{theorem}  
\newtheorem{proposition}[proposition]{Proposition}  
\newaliascnt{remark}{theorem}  
\newtheorem{remark}[remark]{Remark}  
\newaliascnt{notation}{theorem}  
\newaliascnt{example}{theorem}  
\newaliascnt{conjecture}{theorem}  
\newaliascnt{question}{theorem}  
\newaliascnt{fact}{theorem}  
\newaliascnt{claim}{theorem}  
\newcommand{\Boxdot}{\,
                    \setlength{\unitlength}{1ex}
                    \begin{picture}(2,2)
                    \put(0,0){$\Box$}
                    \put(.50,.65){.}
                    \end{picture}
                    \!}
\def\IQC{\hbox{\sf IQC}{} }
\def\CQC{\hbox{\sf CQC}{} }
\def\IPC{\hbox{\sf IPC} }
\def\HA{\hbox{\sf HA}{} }
\def\PA{\hbox{\sf PA}{} }
\def\LC{\hbox{\sf iGLC}{}}
\def\LCS{\hbox{\sf iH}^*_\sigma{}}
\def\CP{\hbox{\sf CP}{} }
\def\TP{\hbox{\sf TP}{} }
\def\NNIL{\hbox{\sf NNIL}{} }
\def\TNNIL{\hbox{\sf TNNIL}{} }
\def\K4{\hbox{\sf K4}{} }
\def\GL{\hbox{\sf GL}{} }
\def\L{\hbox{\sf L}{} }
\def\iGL{\hbox{i\sf GL}{} }
\def\PL{\mathcal{PL}}
\def\PLS{\mathcal{PL}_{\sigma}}
\def\ar{\mid\!\sim}
\def\lle{\hbox{\sf{LLe}}{}^+}
\def\LLE{\sf{LLe}}
\def\lles{\sf{iH}_{\sigma}}
\newcommand{\brt}{\mathrel{\mbox{\textcolor{gray}{$\blacktriangleright$}}}}
\newcommand{\blt}{\mathrel{\mbox{\textcolor{gray}{$\blacktriangleleft$}}}}
\newcommand{\ra }{\rightarrow }
\newcommand{\lr }{\leftrightarrow}
\newcommand{\bo }{\Box }
\newcommand{\blrt }{\brt\!\blt }
\newcommand{\gnumber}[1]{\ulcorner#1\urcorner}
\newcommand{\tinysub}[1]{{^{_{#1}}}}
\begin{document}
\setstcolor{red}
\title{The $\Sigma_1$-Provability Logic of $\HA^*$}
\author{\begin{tabular}{c c}
		Mohammad Ardeshir\thanks{mardeshir@sharif.ir}  
		\quad &	Mojtaba Mojtahedi\thanks{\url{http://mmojtahedi.ir/}}\\
		Department of Mathematical Sciences  
		\quad &Department of Mathematics, \\
		Sharif University of Technology
		\quad &  Statistics and Computer Science, \\ 
		Tehran, Iran
		\quad &College of Sciences,  University of Tehran
\end{tabular}}

\maketitle

\begin{abstract}
For the Heyting Arithmetic $\HA$,  $\HA^*$ is defined
 \cite{VisserThes,Visser82} 
as the theory 
$\{A\mid \HA\vdash A^{\Box}\}$, where $A^{\Box}$ is called the box translation of 
$A$ (Definition \ref{Definition-Box translation}).  We characterize the $\Sigma_1$-provability logic 
of $\HA^*$ as a  
modal theory $\LCS$ (Definition \ref{definition of the main theory}).
\end{abstract}

\tableofcontents

\section{Introduction}\label{sec-introduction}
This paper is a sequel of our previous paper 
\cite{Sigma.Prov.HA}, in which we characterized 
the $\Sigma_1$-provability logic of $\HA$ as a decidable  modal theory $\lles$ 
(see \Cref{definition of the main theory}). 
Most of the materials
of this paper are from the paper mentioned above. Our techniques and proofs are very similar to those 
used there. We use a crucial fact (Theorem \ref{Theorem-Main tool} in this paper) proved in 
\cite{Sigma.Prov.HA}. For the sake of self-containedness as much as possible, we bring here some definitions from that paper.

For an arithmetical  theory $T$ extending  ${\sf HA}$, the following axiom schema is called {\em the Completeness Principle}, ${\sf CP}_\tinysub{T}$:
\begin{center} 
$A\ra\Box_{T} A$.
\end{center}
Recall that {by the work of} G\"{o}del in \cite{Godel}, for each arithmetical formula $A$ and
recursively axiomatizable theory ${T}$ (like {\em Peano Arithmetic} ${\sf PA}$), we can
formalize the statement ``there exists a proof in ${ T}$ for
$A$"  by a sentence of the language of arithmetic, i.e.
${\sf Prov}_\tinysub{T}(\gnumber{A}):=\exists{x}\,{\sf Proof}_\tinysub{T}(x,\gnumber{A})$, 
where $\gnumber{A}$ is the code
of $A$. Now, by {\em interpreting} $\Box_{T}$ by  ${\sf Prov}_\tinysub{T}(\gnumber{A})$, the completeness principle for theory $T$ is read as follows:
\begin{center}
$A\ra {\sf Prov}_\tinysub{T}(\gnumber{A})$.
\end{center}

\noindent Albert Visser in \cite{VisserThes,Visser82} introduced an extension of ${\sf HA}$, 
\begin{center}
$\HA^*:=\HA + {\sf CP}_\tinysub{\sf HA^*}$.
\end{center}
He called $\HA^*$ as a {\em self-completion} of $\HA$. Moreover, he showed that  $\HA^*$ may be  defined as the theory 
$\{A\mid \HA\vdash A^{\Box}\}$, where $A^{\Box}$ is called the {\em box translation} of 
$A$ (Definition \ref{Definition-Box translation}).

The notion of  {\em provability logic} goes back essentially to K.~G\"{o}del 
\cite{Godel33} in 1933. He intended to provide a semantics for 
 Heyting's formalization of {\em intuitionistic logic} $\IPC$.
He defined a
{\em translation}, or {\em interpretation} $\tau$ from the
propositional language to the modal language such that
\begin{center}
${\sf IPC}\vdash A\quad\quad\Longleftrightarrow \quad\quad{\sf S4}\vdash \tau(A)$.
\end{center}

Now the question is whether we can find  some modal propositional theory 
such  that the $\Box$ operator captures  the notion of {\em
provability} in  Peano Arithmetic ${\sf PA}$. Hence 
the question is to find some propositional modal theory $T_\Box$ such that:
$$T_\Box\vdash A\quad\quad \Longleftrightarrow \quad\quad \forall{*} \ \PA\vdash A^*$$
By  $(\ )^*$, we mean a mapping from the modal language to the first-order language of arithmetic, such that
\begin{itemize}
\item $p^*$ is an arithmetical first-order sentence, for any atomic 
variable $p$, and $\bot^*=\bot$,
\item $(A\circ B)^*=A^*\circ B^*$, for $\circ\in\{\vee, \wedge, \rightarrow\}$,
\item $(\Box A)^*:=\exists{x}\,{\sf Proof}_\tinysub{\sf PA}(x,\gnumber{A^*})$.
\end{itemize}

 It turned out that ${\sf S4}$ is {\em not} a right candidate for
interpreting  the notion of {\em provability}, since
$\neg\,\Box\bot$ is a theorem of ${\sf S4}$, contradicting
G\"{o}del's second incompleteness theorem (Peano Arthmetic ${\sf PA}$, does not prove its own consistency).

{Martin L\"{o}b in 1955  showed } \cite{Lob} {that the L\"{o}b's 
rule ($ \Box A\to A/A $)  is valid. Then } in 1976, Robert Solovay \cite{Solovay} proved that  the right modal logic, in
which the $\Box$ operator interprets the notion of {\em
provability in {\sf PA}}, is $\GL$. This modal logic is well-known as the
G\"{o}del-L\"{o}b logic, and has the following axioms and
rules:
\begin{itemize}
\item all tautologies of classical propositional logic,
\item $\Box (A\rightarrow B)\rightarrow(\Box A\rightarrow\Box B)$,
\item $\Box A\rightarrow\Box\Box A$,
\item L\"{o}b's axiom \textup{(}{\sf L}\textup{)}: $\Box(\Box A\rightarrow A)\rightarrow\Box A$,
\item Necessitation Rule: $A/\Box A$,
\item Modus ponens: $(A,A\rightarrow B)/B$.
\end{itemize}
\noindent {\bf Theorem.} {{(Solovay-L\"{o}b)}} 
{\em For any sentence $A$ in the language of modal logic, ${\sf
GL}\vdash A$ if and only if for all interpretations $(\ )^*$,
${\sf PA}\vdash A^*$. }
\vspace{.15cm}

Now let we restrict the map $(\ )^*$ on the atomic variables in the following sense. For any atomic variable 
$p$, $(p)^*$ is a $\Sigma_1$ sentence. This translation or interpretation is called  $\Sigma_1$-interpretation.
On the other hand, let ${\sf GLV} = \GL + {\sf CP}_a$, where 
${\sf CP}_a$ is the completeness principle restricted to atomic variables, i.e., $p\ra\Box p$. Albert Visser  \cite{VisserThes} proved the following result:   
\vspace{.15cm}

\noindent {\bf Theorem.} {{(Visser)}} 
{\em For any sentence $A$ in the language of modal logic, ${\sf
GLV}\vdash A$ if and only if for all $\Sigma_1$ interpretations $(\ )^*$,
${\sf PA}\vdash A^*$. }
\vspace{.15cm}

The question of generalizing Solovay's result from classical
theories to intuitionistic ones, such as the intuitionistic
counterpart of ${\sf PA}$, well-known as 
${\sf HA}$, proved to be remarkably difficult and remains a major {\em open problem} 
since the end of 70s \cite{ArtBekProv}. 
For a detailed history of the origins, backgrounds and motivations of the {\em provability logic}, 
we refer the readers to 
\cite{ArtBekProv}.

The following list contains crucial results about the provability logic of $\HA$ with arithmetical nature:
\begin{itemize}
\item John Myhill 1973 and Harvey Friedman 1975. $ {\sf HA}\nvdash \Box_\tinysub{{\sf HA}} (A\vee B)\to(\Box_\tinysub{{\sf HA}} A\vee \Box_\tinysub{{\sf HA}} B)$, \cite{Myhill,Friedman75}.
\item Daniel Leivant 1975. ${\sf HA}\vdash\Box_\tinysub{{\sf HA}}(A\vee B)\to\Box_\tinysub{{\sf HA}}(\Boxdot_\tinysub{{\sf HA}} A\vee\Boxdot_\tinysub{{\sf HA}} B)$, in which $\Boxdot_\tinysub{{\sf HA}} A$ is a shorthand for
$A\wedge\Box_\tinysub{{\sf HA}} A$, \cite{Leivant-Thesis}. 
\item Albert Visser 1981. ${\sf HA}\vdash\Box_\tinysub{{\sf HA}}\neg\neg\,\Box_\tinysub{{\sf HA}} A\to\Box_\tinysub{{\sf HA}}\Box_\tinysub{{\sf HA}} A$ and 
${\sf HA}\vdash \Box_\tinysub{{\sf HA}}(\neg\neg\,\Box_\tinysub{{\sf HA}} A\to\Box_\tinysub{{\sf HA}} A)\to\Box_\tinysub{{\sf HA}}(\Box_\tinysub{{\sf HA}} A\vee \neg\,\Box_\tinysub{{\sf HA}} A)$, \cite{VisserThes,Visser82}.
\item {Rosalie Iemhoff 2001 introduced} a uniform axiomatization  of all known axiom schemas of  the provability logic of ${\sf HA}$ in an extended language 
with a bimodal operator $\rhd$. In her Ph.D. dissertation \cite{IemhoffT}, Iemhoff raised a conjecture that implies directly that her axiom system, ${\sf iPH}$,  restricted to the normal modal language, is equal to the provability logic of  ${\sf HA}$, \cite{IemhoffT}. 
\item {Albert Visser 2002  introduced} a decision algorithm for ${\sf HA}\vdash A$, for all modal propositions $A$ not containing any atomic variable, i.e. $ A $ is made up of $\top, \bot$ via  the unary modal connective $\Box_\tinysub{{\sf HA}}  $ and propositional connectives $ \vee,\wedge,\to $, \cite{Visser02}. 
\item Mohammad Ardeshir and Mojtaba Mojtahedi  2014 characterized  the $\Sigma_1$-provability logic of $\HA$ as a decidable  modal theory \cite{Sigma.Prov.HA}, named there and here as  $\lles$. Recently, Albert Visser and Jetze Zoethout \cite{Vis-Jet-2018arXiv} proved this result by an alternative method.
\end{itemize}

The authors of \cite{reduction} found  a 
{\em reduction} of the Solovay-L\"{o}b Theorem  to the Visser Theorem {\em only by propositional substitutions} \cite{reduction}. This result is tempting to think of applying similar method  for the intuitionistic case. However it seems to us that there is no obvious way of doing  such reduction for the intuitionistic case, and it should be more complicated than the classical case.

In this paper, we introduce an axiomatization of a decidable modal theory $\LCS$ (see Definition 
\ref{definition of the main theory})  
and prove that it is the  
$ \Sigma_1 $-provability logic of $\HA^*$. 
This arithmetical theory is  defined \cite{VisserThes,Visser82} as the theory 
$\{A\mid \HA\vdash A^{\Box}\}$, 
where $A^{\Box}$ is called {\em the box translation} of 
$A$ (Definition \ref{Definition-Box translation}). 
 It is worth mentioning that our proof of the $\Sigma_1$-provability logic of $\HA^*$ is in some sense, a {\em reduction} to the proof of the $\Sigma_1$-provability logic of $\HA$, {\em only by propositional modal logic}.

\section{Definitions, conventions and basic facts}
The propositional non-modal language contains atomic variables,
$\vee, \wedge, \ra, \bot$ and  propositional modal language is
propositional non-modal language plus  $\square$. We use $\boxdot
A$ as a shorthand for $A\wedge\square A$. For simplicity, in this
paper we use propositional language instead of propositional {\em
modal} language. \IPC is the intuitionistic propositional
non-modal logic over usual propositional non-modal language.
$\IPC_\square$ is the same theory \IPC in the extended language
of propositional modal language, i.e. its language is
propositional modal language and its axioms and rules are the
same as the one in $\IPC$. Since we have no axioms for $\square$
in $\IPC_\square$, it is obvious that $\square A$ for each $A$,
behave exactly like an atomic variable inside $\IPC_\square$.
Note that nothing more than  symbol of $A$ plays a role in
$\square A$. The first-order intuitionistic theory is denoted
with $\IQC$ and $\CQC$ is its classical closure, i.e. $\IQC$ plus
the principle of excluded middle. We have the usual first-order
language of arithmetic which has a primitive recursive function
symbol for each primitive recursive function. We use the same
notations and definitions for Heyting's arithmetic \HA as in
\cite{TD}, and Peano Arithmetic \PA is \HA plus the principle of
excluded middle. For a set of sentences and rules
$\Gamma\cup\{A\}$ in propositional non-modal, propositional modal
or first-order language, $\Gamma\vdash A$ means that $A$ is
derivable from $\Gamma$ in the system $\IPC, \IPC_\square,\IQC$,
respectively. 

\begin{definition}\label{Definition-Arithmetical substitutions}
Suppose $T$ is an r.e arithmetical theory and $\sigma$ is a
function from atomic variables to arithmetical sentences. We
extend $\sigma$ to all modal propositions $A$, inductively:
\begin{itemize}
\item $\sigma_T(A):=\sigma(A)$ for atomic $A$,
\item $\sigma_T$ distributes over $\wedge, \vee, \ra$,
\item $\sigma_T(\square A):=Pr_{T}(\ulcorner\sigma_T(A)\urcorner)$, in
which $Pr_{T}(x)$ is the $\Sigma_1$-predicate that formalizes
provability of a sentence with G\"{o}del number $x$, in the
theory $T$.
\end{itemize}
We call $\sigma$ to be a $\Sigma_1$-substitution, if for every
atomic $A$, $\sigma(A)$ be a $\Sigma_1$-formula.
\end{definition}

\begin{definition}\label{Definition-Provability Logic}
Provability logic of a sufficiently strong theory, $T$ is defined
to be a modal propositional theory $\PL(T)$ such that
$\PL(T)\vdash A$  iff for arbitrary arithmetical substitution
$\sigma_T$, \ \ $T\vdash\sigma_T(A)$. If we restrict the
substitutions to $\Sigma$-substitutions, then the new modal
theory is $\PLS(T)$.
\end{definition}

\begin{lemma}\label{Lemma-boxed as atomic}
Let $A(p_1, \ldots, p_n)$ be a non-modal proposition with
$p_i\neq p_j$ for all $0<i<j\leq n$. Then for every modal
sentences $B_1, \ldots, B_n$ with $B_i\neq B_j$ for $0<i<j\leq n$
we have:
$$\IPC\vdash A \text{\ \ iff\ \ } \IPC_\square\vdash A[p_1|\square B_1,\ldots,p_n|\square B_n].$$
\end{lemma}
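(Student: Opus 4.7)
The plan is to exploit the fact that \IPC$_\square$ is simply \IPC taken over the extended modal language \emph{with no axioms or rules governing $\square$}. Hence every $\square$-prefixed subformula encountered inside a proof in \IPC$_\square$ can be treated as an opaque atomic placeholder, and a proof in \IPC$_\square$ can be flattened to a proof in \IPC by replacing each such placeholder with a propositional variable. This yields the nontrivial direction; the other direction is just closure of \IPC (and hence \IPC$_\square$) under uniform substitution.

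For the forward direction ($\Rightarrow$), I would observe that since the $p_i$ are atomic and \IPC\ is closed under uniform substitution of formulas of its language, and \IPC$_\square$ extends this closure to the larger language, substituting $\square B_i$ for $p_i$ takes an \IPC-proof of $A$ to an \IPC$_\square$-proof of $A[p_1|\square B_1,\ldots,p_n|\square B_n]$.

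For the reverse direction ($\Leftarrow$), I would fix a flattening translation $\tau$ from modal formulas to non-modal formulas. Choose an injection $f$ from the set $\{\square C : C \text{ a modal formula}\}$ into atomic propositional variables such that $f(\square B_i) = p_i$ for $i = 1,\ldots,n$ (this is possible because the $B_i$, and hence the $\square B_i$, are pairwise distinct, and we have an unlimited supply of fresh variables disjoint from $\{p_1,\ldots,p_n\}$ for the remaining $\square C$). Then define $\tau$ recursively by $\tau(\bot) = \bot$, $\tau(q) = q$ on atomic $q$, $\tau(\square C) = f(\square C)$, and letting $\tau$ commute with $\vee$, $\wedge$, $\ra$. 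By construction $\tau(A[p_1|\square B_1,\ldots,p_n|\square B_n]) = A(p_1,\ldots,p_n)$, since each $\square B_i$ appearing at atom-positions of $A$ gets mapped back to $p_i$ and no other $\square$-subformula exists inside that formula.

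The core step is then to check that $\tau$ preserves derivability: if $\IPC_\square \vdash \varphi$, then $\IPC \vdash \tau(\varphi)$. This is a straightforward induction on a natural-deduction (or Hilbert-style) derivation in \IPC$_\square$. The only axioms and rules of \IPC$_\square$ are those of \IPC, lifted verbatim to the extended language; in each axiom schema and each inference rule, every subformula is treated generically, so applying $\tau$ pointwise sends an instance to an \IPC-instance of the same schema. I do not expect any real obstacle here: the verification is uniform across connectives, and there are no $\square$-specific rules to inspect. Applying this to the assumed derivation of $A[p_1|\square B_1,\ldots,p_n|\square B_n]$ and using the identity computed above yields $\IPC \vdash A$, completing the proof.
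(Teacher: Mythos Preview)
Your proposal is correct and is essentially the same approach as the paper's: the paper simply says ``by simple inductions on complexity of proofs in \IPC and $\IPC_\square$,'' and what you have written is a careful unpacking of exactly that idea --- closure under substitution for the forward direction, and a variable-flattening translation that commutes with all \IPC-rules for the converse.
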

\begin{proof}
By simple inductions on complexity of proofs in \IPC and
$\IPC_\square$.
\end{proof}

The following definition, the Beeson-Visser box-translation, is
essentially from (\cite[Def.4.1]{Visser82}).
\begin{definition}\label{Definition-Box translation}
For every proposition $A$ in  modal propositional language, we
associate a proposition $A^\square$, called  box-translation
of $A$, defined inductively as follows:
\begin{itemize}
\item $A^\square:= A\wedge\square A$, for atomic $A$, and $\bot^\square =
\bot$,
\item $(A\circ B)^\square:=A^\square\circ B^\square$, for $\circ\in\{\vee,\wedge\}$,
\item $(A\ra B)^\square:=(A^\square\ra B^\square)\wedge\square(A^\square\ra B^\square)$,
\item $(\square A)^\square:=\square(A^\square)$.
\end{itemize}
The box-translation can be extended to first-order arithmetical
formulae $A$, as follows:
\begin{itemize}
\item $(\forall{x}A)^\square:=\square(\forall{x}A^\square)\wedge\forall{x}A^\square$,
\item $(\exists{x}A)^\square:=\exists{x}A^\square$.
\end{itemize}
\end{definition}

Define  ${\sf NOI}$ (No Outside Implication) as set of modal
propositions $A$, that any occurrence of $\ra$ is in the scope of
some $\square$. To be able to state an extension of Leivant's
Principle (that is adequate to axiomatize $\Sigma_1$-provability
logic of $\HA$) we need a translation on modal language which we
name it Leivant's translation. We define it recursively as
follows:
\begin{itemize}
\item $A^l:=A$ for atomic $A$, boxed $A$ or $A=\bot$,
\item $(A\wedge B)^l:=A^l\wedge B^l$,
\item $(A\vee B)^l:=\boxdot A^l\vee\boxdot B^l$,
\item $(A\ra B)^l$ is defined by cases: If $A\in {\sf NOI}$, define
$(A\ra B)^l:=A\ra B^l$, else define $(A\ra B)^l:=A\ra B$.
\end{itemize}

\begin{definition} \label{Def-Axiom schema and modal theories}
Minimal provability logic \iGL, is  same as G\"{o}del-L\"{o}b
provability logic \GL, without the principle of excluded middle,
i.e. it has the following axioms and rules:
\begin{itemize}
\item theorem of $\IPC_{\square}$,
\item $\square (A\ra B)\ra(\square A\ra\square B)$,
\item $\square A\ra\square\square A$,
\item L\"{o}b's axiom ({\sf L}): $\square(\square A\ra A)\ra\square A$,
\item Necessitation Rule: $A/\square A$,
\item Modus ponens: $(A,A\ra B)/B$.
\end{itemize}
\subsection{Definition of modal theories}
$i\textbf{K}4$ is $i\GL$ without  L\"{o}b's axiom. Note
that we can get rid of the necessitation rule by adding $\square
A$ to the axioms, for each axiom $A$ in the above list. We will
use this fact later in this paper. 
We list the following axiom schemae:
\begin{itemize}
\item The Completeness Principle: $\CP:=A\ra\square A$.
\item Restricted Completeness Principle to atomic formulae: $\CP_a:=p\ra\square p$, for atomic $p$.
\item Leivant's Principle: ${\sf Le}:=\square(B\vee C)\ra\square (\square B\vee C)$.  \cite{Leivant}
\item Extended Leivant's Principle: ${\sf Le}^+:=\square A\ra\square A^l$.
\item Trace Principle: $\TP:=\square(A\ra B)\ra(A\vee(A\ra B))$. \cite{Visser82}
\end{itemize}
We define theories $\LC:=i\GL+\CP$,  ${\sf H}:=\LC+\TP$,
$\LLE:=i\GL+{\sf Le}$ and $\lle:=i\GL+{\sf Le}^++\CP_a$. Note
that in the presence of \CP and modus ponens, the necessitation
rule is superfluous. Later we will find Kripke semantics for
$\LC$  and also we will see that $\LC$ and  $\lle$ proves the
same formulae of restricted complexity \textup{(}$\TNNIL$\textup{)}.
\end{definition}

\subsection{$\HA^*$ and $\PA^*$}
$\HA^*$ and $\PA^*$ were first introduced in \cite{Visser82}.
These theories are defined as
$$\HA^*:=\{A\mid\HA\vdash A^\square\} \quad \text{ and }\quad
\PA^*:= \{A\mid\PA\vdash A^\square\}.$$

Visser in \cite{Visser82} showed that the provability logic of
$\PA^*$ is {\sf H}, i.e. ${\sf H}\vdash A$ iff  for all
arithmetical substitution $\sigma$, \ $\PA^*\vdash \sigma_{\sf
PA^*}(A)$. That means that $$\PL(\PA^*)=\PLS(\PA^*)={\sf H}.$$

\begin{lemma}\label{Lemma-Properties of Box translation}
\begin{enumerate}
\item For any arithmetical $\Sigma_1$-formula $A$,  $\HA\vdash A\lr{A^\square}$.
\item $\HA$ is closed under the box-translation, i.e., for any arithmetical formula
$A$, $\HA\vdash A$ implies $\HA\vdash A^\square$, so
$\HA\subseteq\HA^*$.
\end{enumerate}
\end{lemma}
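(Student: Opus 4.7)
My plan is to prove Part 1 by induction on the $\Sigma_1$-structure of $A$, and Part 2 by induction on the length of a derivation in $\HA$, using as a key tool the auxiliary lemma that $\HA \vdash A^\square \to \square A^\square$ holds for every arithmetical $A$.

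For Part 1, I would first note that the direction $\HA \vdash A^\square \to A$ is trivial for every arithmetical $A$, since each top-level constructor of $(\cdot)^\square$ either agrees with the original formula (for $\vee, \wedge, \exists$) or conjoins additional $\square$-material to it (for atomic formulas, $\to$, and $\forall$), so the $A$-part can always be extracted. For the reverse direction, I would induct on a $\Sigma_1$ formula $A$: in the atomic case $A^\square = A \wedge \square A$, and provable $\Sigma_1$-completeness of $\HA$ gives $\HA \vdash A \to \square A$ (atomic equations are $\Delta_0$, hence $\Sigma_1$); the cases $\wedge$ and $\vee$ are immediate from the induction hypothesis and the clauses $(A \circ B)^\square = A^\square \circ B^\square$; the case $\exists x$ uses $(\exists x\, A)^\square = \exists x\, A^\square$ together with a witness argument. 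Bounded universal quantifiers, if one uses a $\Sigma_1$-normal form that permits them, reduce to finite conjunctions.

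For Part 2, I would first establish the auxiliary lemma $\HA \vdash A^\square \to \square A^\square$ by structural induction on arithmetical $A$: the atomic case combines $\HA \vdash A \to \square A$ with the $4$-axiom; the $\to$ and $\forall$ cases are immediate because $(\cdot)^\square$ already supplies the required $\square$-conjunct and the $4$-axiom gives the rest; the $\wedge$ and $\vee$ cases use distribution of $\square$ over $\wedge$; the $\exists$ case uses the induction hypothesis together with generalisation inside $\square$. Then I would induct on the length of an $\HA$-derivation of $A$. The intuitionistic logical axioms and rules transfer through by a separate easy induction, exploiting that $(C \to D)^\square$ has $C^\square \to D^\square$ as a conjunct (so modus ponens carries over cleanly) and using necessitation to recover the $\square$-conjunct. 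The non-induction axioms of $\HA$ are quantifier-free, hence $\Sigma_1$, so Part 1 reduces the task to their own provability. For the induction schema $I_\varphi := \varphi(0) \wedge \forall x(\varphi(x) \to \varphi(Sx)) \to \forall x\, \varphi(x)$, setting $\psi := \varphi^\square$, one uses ordinary $\HA$-induction on $\psi$ to prove $\forall x\, \psi(x)$, and formalised $\HA$-induction inside $\square$ to prove $\square(\forall x\, \psi(x))$.

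The main obstacle is precisely the induction schema. Its translation $(I_\varphi)^\square$ contains a $\square$-conjunct demanding a formalised internal proof of $\forall x\, \psi(x)$, whereas at the outer level the induction base is only given as $\psi(0)$ and not as $\square \psi(0)$. The auxiliary lemma $A^\square \to \square A^\square$ is exactly the tool that bridges this gap: it upgrades $\psi(0)$ to $\square \psi(0)$, after which the other $\square$-hypothesis supplies $\square(\forall x(\psi(x) \to \psi(Sx)))$, and formalised induction inside $\square$ yields $\square(\forall x\, \psi(x))$. The inclusion $\HA \subseteq \HA^*$ then follows immediately from Part 2 together with the definition $\HA^* = \{A \mid \HA \vdash A^\square\}$.
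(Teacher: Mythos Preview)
The paper's own proof is simply a pair of citations to Visser's 1982 paper, so you are supplying the details the paper omits. Your overall strategy---structural induction on $\Sigma_1$-formulas for Part~1, and for Part~2 the auxiliary self-completeness lemma $\HA\vdash A^\square\to\square A^\square$ followed by induction on the length of an $\HA$-derivation, with special attention to the induction schema---is the standard one and is essentially what Visser does.

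There is, however, one genuine error in your treatment of Part~1. You claim that $\HA\vdash A^\square\to A$ holds for \emph{every} arithmetical $A$, arguing that the $A$-part can always be extracted. This is false. In the implication clause $(B\to C)^\square=(B^\square\to C^\square)\wedge\square(\ldots)$, recovering $B\to C$ from $B^\square\to C^\square$ needs the \emph{contravariant} direction $B\to B^\square$ for the antecedent, which fails in general. Concretely, for $A=\neg\forall x\,P(x)$ with $P$ atomic, $A^\square\to A$ would entail $\forall x\,P(x)\to\square\forall x\,P(x)$, a $\Pi_1$-completeness principle that $\HA$ does not prove. This does not actually break your proof of Part~1: for $\Sigma_1$ formulas the same structural induction establishes both directions at once, since the problematic cases (unbounded $\forall$, implications with non-$\Sigma_1$ antecedents) do not arise. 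You should simply drop the unrestricted claim and run the bi-directional induction over the $\Sigma_1$-structure. A smaller point: bounded universal quantifiers do not ``reduce to finite conjunctions'' when the bound contains free variables; the clean fix is either to use provable decidability of $\Delta_0$ formulas in $\HA$, or (since the language has symbols for all primitive recursive functions) to normalize $\Sigma_1$ to the form $\exists\vec{x}\,(\text{atomic})$ and sidestep the issue entirely.
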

\begin{proof}
\begin{enumerate}
\item See \cite{Visser82}(4.6.iii).
\item See \cite{Visser82}(4.14.i).
\end{enumerate}
\end{proof}

\begin{lemma}\label{Lemma-Properties of Box translation 2}
For any $\Sigma_1$-substitution $\sigma$ and each propositional
modal sentence $A$, we have $\HA\vdash \sigma_{{\sf
HA}}(A^\square)\lr(\sigma_{{\sf HA}^*}(A))^\square$ and hence
\begin{center}
$\HA\vdash\sigma_{{\sf HA}}(A^\square)$  \quad iff \quad  $\HA^*\vdash\sigma_{{\sf HA}^*}(A)$
\end{center}
\end{lemma}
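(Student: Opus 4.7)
The plan is to prove by induction on the modal complexity of $A$ that $\HA\vdash \sigma_{\HA}(A^\square)\lr(\sigma_{\HA^*}(A))^\square$, and then deduce the iff statement. The latter is immediate from this equivalence combined with the definition of $\HA^*$: namely, $\HA\vdash\sigma_{\HA}(A^\square)$ iff $\HA\vdash (\sigma_{\HA^*}(A))^\square$ iff $\HA^*\vdash \sigma_{\HA^*}(A)$.

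For the atomic case $A=p$, since $\sigma(p)$ is $\Sigma_1$, \Cref{Lemma-Properties of Box translation}(1) gives $\HA\vdash \sigma(p)\lr (\sigma(p))^\square$, and by provable $\Sigma_1$-completeness $\HA\vdash \sigma(p)\ra \Prv_{\HA}(\gnumber{\sigma(p)})$. Hence both $\sigma_{\HA}(p^\square)=\sigma(p)\wedge\Prv_{\HA}(\gnumber{\sigma(p)})$ and $(\sigma_{\HA^*}(p))^\square$ are HA-equivalent to $\sigma(p)$, and the case $A=\bot$ is trivial. The cases $A=B\wedge C$ and $A=B\vee C$ follow directly from the induction hypothesis, since the relevant translations distribute over these connectives. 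For $A=B\ra C$, the induction hypothesis yields HA-equivalence of $\sigma_{\HA}(B^\square)\ra\sigma_{\HA}(C^\square)$ with $(\sigma_{\HA^*}(B))^\square\ra(\sigma_{\HA^*}(C))^\square$; this equivalence then lifts under $\Prv_{\HA}$ by necessitation and modal K, matching the additional conjunct prescribed by $(B\ra C)^\square=(B^\square\ra C^\square)\wedge\square(B^\square\ra C^\square)$.

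For $A=\square B$ one has $\sigma_{\HA}((\square B)^\square)=\Prv_{\HA}(\gnumber{\sigma_{\HA}(B^\square)})$, while $\sigma_{\HA^*}(\square B)=\Prv_{\HA^*}(\gnumber{\sigma_{\HA^*}(B)})$ is $\Sigma_1$ and hence HA-equivalent to its own box-translation by \Cref{Lemma-Properties of Box translation}(1). Applying the induction hypothesis for $B$ inside the provability predicate reduces the required equivalence to $\HA\vdash\Prv_{\HA}(\gnumber{\phi^\square})\lr\Prv_{\HA^*}(\gnumber{\phi})$ with $\phi=\sigma_{\HA^*}(B)$.

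This last identity is the main obstacle: it requires a formalization in HA of the defining clause $\HA^*\vdash\phi\iff\HA\vdash\phi^\square$. It is arranged by fixing the $\Sigma_1$ provability predicate of $\HA^*$ to be $\Prv_{\HA^*}(\gnumber{\phi})\equiv\Prv_{\HA}(\gnumber{\phi^\square})$, which is legitimate since the arithmetic box-translation is primitive recursive on G\"odel numbers; the needed HA-equivalence then holds essentially by definition, a standard formalization already implicit in \cite{Visser82}.
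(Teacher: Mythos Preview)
Your proof is correct and follows essentially the same approach as the paper: induction on the complexity of $A$, with the atomic case handled via \Cref{Lemma-Properties of Box translation}(1). The paper's own proof is extremely terse (``Use induction on the complexity of $A$. All the steps are straightforward. For the atomic case, we use Lemma~\ref{Lemma-Properties of Box translation}.1''), whereas you supply the details the paper omits, in particular the $\Box$-case, where you correctly isolate the one nontrivial ingredient: the HA-provable equivalence $\Prv_{\sf HA}(\gnumber{\phi^\Box})\leftrightarrow\Prv_{{\sf HA}^*}(\gnumber{\phi})$, obtained by taking $\Prv_{{\sf HA}^*}(x)$ to be (provably equivalent to) $\Prv_{\sf HA}$ applied to the primitive-recursive box-translation of $x$.
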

\begin{proof}
Use induction on the complexity of $A$. All the steps are
straightforward. For the atomic case, we use Lemma
\ref{Lemma-Properties of Box translation}.1.
\end{proof}

\begin{remark}
{\em This lemma can be combined with the characterization of the 
$\Sigma_1$-provability logic of $\HA$  to derive directly a characterization of the 
$\Sigma_1$-provability logic of $\HA^*$:
\begin{center}{\em
$A$ belongs to the $\Sigma_1$-provability logic of $\HA^*$ iff $A^\Box$ belongs to the
 $\Sigma_1$-provability logic of ${\HA} $.}
\end{center}
This means that we  have a decision algorithm 
for the $ \Sigma_1 $-provability logic of $ \HA^* $. The rest of this paper is devoted to axiomatize the  $\Sigma_1$-provability logic of $\HA^*$.
}
\end{remark}
\section{Propositional modal logics}
\subsection{NNIL formulae and related topics}

The class of {\em No Nested Implications in the Left}, \NNIL
formulae in a propositional language was introduced in \cite{Visser-Benthem-NNIL}
 , and more explored in \cite{Visser02}. The crucial
result of \cite{Visser02} is providing an algorithm that as
input, gets a non-modal proposition $A$ and returns its best \NNIL
approximation $A^*$ from below, i.e., $\IPC\vdash A^*\ra A$ and
for all \NNIL formula $B$ such that $\IPC\vdash B\ra A$, we have
$\IPC\vdash B\ra A^*$. In the following we explain this algorithm
and explain how to extend it to modal propositions.

To define the class of \NNIL propositions, let us first define a
complexity measure $\rho$ on non-modal propositions as follows:
\begin{itemize}
\item $\rho p= \rho\bot=\rho\top = 0$, where $p$ is an atomic proposition, 
\item $\rho(A\wedge B) = \rho(A\vee B) = max (\rho A, \rho B)$,
\item $\rho(A\ra B) = max (\rho A +1, \rho B)$,
\end{itemize}
Then $\NNIL = \{A\mid \rho A\leq 1\}$.
\begin{definition}\label{non-modal complexity}
We define a measure complexity for modal propositions $D$ as
follows:
\begin{itemize}
\item $I(D):=\{E\in Sub(D) \mid E \text{ is an implication that is not in the scope of a } \square\}$,
\item $\mathfrak{i}(D):=max\{|I(E)|\mid E\in I(D)\}$, where $|X|$
is the number of elements of $X$,
\item $\mathfrak{c}D:=$ the number of occurrences of logical connectives which is not in the scope of a $\square$,
\item $\mathfrak{d}D:=$ the maximum number of nested boxes. To be more precise,
\begin{itemize}
\item $\mathfrak{d}D:=0$ for atomic $D$,
\item $\mathfrak{d}D:=max\{\mathfrak{d}D_1,\mathfrak{d}D_2\}$, where $D = D_1\circ D_2$
and $\circ\in\{\wedge, \vee, \ra\}$,
\item $\mathfrak{d}\square D:=\mathfrak{d}D+1$,
\end{itemize}
\item $\mathfrak{o}D:=(\mathfrak{d}D,\mathfrak{i}D,\mathfrak{c}D)$.
\end{itemize}
Note that the measure $\mathfrak{o}D$ is ordered
lexicographically, i.e., $(d,i,c)<(d',i',c')$ iff $d<d'$ or $d=d'
, i<i'$ or $d=d', i=i' , c<c'$.
\end{definition}

\begin{definition}\label{definition-braket-nonmodal}
For any two modal propositions $A,B$, we define  $[A]B$ 
and $[A]'B$, 
 by induction on the complexity of $B$:
\begin{itemize}
\item $[A]p = [A]'p = p$, for atomic $p$, $\top$ and $\bot$,
\item $[A](B_1\circ B_2) = [A](B_1)\circ [A](B_2)$, 
$[A]'(B_1\circ B_2) =[A]'(B_1)\circ [A]'(B_2)$
for $\circ\in\{\vee,\wedge\}$,
\item $[A](B_1\ra B_2) = A\ra (B_1\ra B_2)$, 
$[A]'(B_1\ra B_2)=(A'\wedge B_1)\ra B_2$,
in which $A' = A[{B_1\ra B_2}\mid B_2]$, i.e., replace each
occurrence of $B_1 \ra B_2$ in $A$ by $B_2$,
\end{itemize}
\end{definition}

\subsubsection*{$\NNIL$-algorithm}
 For each proposition $A$,
$A^*$ is produced by induction on complexity measure
$\mathfrak{o}A$ as follows. 
 For details see \cite{Visser02}.
\begin{enumerate}
\item $A$ is atomic, take $A^*:=A$,
\item $ A=B \wedge C$, take $A^*:=B^*\wedge C^*$,
\item $ A=B\vee C$, take $A^*:=B^*\vee C^*$,
\item $ A=B \ra C $, we have several sub-cases. In the following, an occurrence
of $E$ in $D$ is called an {\em outer occurrence}, if $E$ is not
in the scope of an implication.\\
4.a. $C$ contains an outer occurrence of a conjunction. In this
case, we assume some formula $J(q)$ such that
\begin{itemize}
\item $q$ is a propositional variable not occurred in $A$,
\item $q$ is outer in $J$ and occurs exactly once,
\item $C=J[q|(D\wedge E)]$.
\end{itemize}
Such $J$ obviously exists. Now set $C_1:=J[q|D], C_2:=J[q|E]$ and
$A_1:=B\ra C_1, A_2:=B\ra C_2$ and finally, define
$A^*:=A_1^*\wedge A_2^*$.

4.b. $B$ contains an outer occurrence of a disjunction. In this
case we suppose some formula $J(q)$ such that
\begin{itemize}
\item $q$ is a propositional variable not occurred in $A$,
\item $q$ is outer in $J$ and occurs exactly once,
\item $B=J[q|(D\vee E)]$.
\end{itemize}
Such $J$ obviously exists.  Now set $B_1:=J[q|D], B_2:=J[q|E]$
and $A_1:=B_1\ra C, A_2:=B_2\ra C$ and finally, define
$A^*:=A_1^*\wedge A_2^*$.

4.c.  $B=\bigwedge X$ and $C=\bigvee Y$ and $X,Y$ are sets of
implications or atoms. We have several sub-cases:

4.c.i. $X$ contains atomic $p$. Set
$D:=\bigwedge(X\setminus\{p\})$ and take $A^*:=p\ra(D\ra C)^*$.

4.c.ii. $X$ contains $\top$. Define
$D:=\bigwedge(X\setminus\{\top\})$ and take $A^*:=(D\ra C)^*$.

4.c.iii. $X$ contains $\bot$. Take $A^*:=\top$.

4.c.iv. $X$ contains only implications. For any $D=E\ra F\in X$,
let
$$B\downarrow D:=\bigwedge((X\setminus\{D\})\cup\{F\}).$$
Let $Z:=\{E\mid E\ra F\in X\}\cup\{C\}$ and
$A_0:=[B]Z:=\bigvee\{[B]E\mid E\in Z\}$. Now if
$\mathfrak{o}A_0<\mathfrak{o}A$, we take
$$A^*:=\bigwedge\{((B\downarrow D)\ra C)^*|D\in X\}\wedge A_0^*,$$
otherwise, first set $A_1:=[B]'Z$ and then take
$$A^*:=\bigwedge\{((B\downarrow D)\ra C)^*|D\in X\}\wedge A_1^*$$
\end{enumerate}

We can extend $\rho$ to all modal language with $\rho(\square A):=0$.
The class of \NNIL propositions may be defined for propositional
modal language as well, i.e. we call a modal proposition $A$ to be $\NNIL_\square$, if
$\rho(A)\leq 1$ (for extended $\rho$).
We also define two other classes of propositions:
\begin{definition}
$\TNNIL$ \textup{(}Thoroughly $\NNIL$\textup{)} is the
smallest class of propositions such that
\begin{itemize}
\item $\TNNIL$ contains all atomic propositions,
\item if $A, B\in\TNNIL$, then $A\vee B, A\wedge B,\square A\in\TNNIL$,
\item if all $\ra$ occurred in $A$ are contained in the scope of a $\square$ 
\textup{(}or equivalently
$A\in {\sf NOI}$\textup{)} and $A,
B\in\TNNIL$, then $A\ra B\in\TNNIL$.
\end{itemize}
Finally we define $\TNNIL^-$  as the set of all the propositions
like $  A(\square B_1,\ldots,\square B_n)$, such that
$A(p_1,\ldots,p_n)$ is an arbitrary non-modal proposition and
$B_1,\ldots,B_n\in\TNNIL$.
\end{definition}

We can use the same algorithm with slight modifications treating
propositions inside $\square$ as well. First we extend Definition
\ref{definition-braket-nonmodal} to capture modal language.

\begin{definition}\label{definition-braket}
For any two modal propositions $A,B$, we define $[A]B$ 
and $[A]'B$ 
by induction on the complexity of $B$. We extend
Definition {\em \ref{non-modal complexity}} by the following item:
\begin{itemize}
\item $[A]\Box B_1 =[A]'\Box B_1 :=\square B_1$.
\end{itemize}
\end{definition}
It is clear that we are treating a boxed formula as an atomic variable.
\subsubsection*{$\NNIL_{\Box}$-algorithm}
We use $\NNIL$-algorithm with the following changes to
produce a similar $\NNIL$-algorithm for modal language.

\vspace{.1in}

\noindent 1. $A$ is atomic or boxed, take $A^*=A$.

\vspace{.1in}

\noindent 4. An occurrence of $E$ in $D$ is called an {\em outer
occurrence}, if $E$ is neither in the scope of an implication nor
in the scope of a boxed formula.

\vspace{.05in}

\noindent 4. c(i). $X$ contains atomic or boxed formula $p$. We
set $D:=\bigwedge(X\setminus\{p\})$ and take $A^*:=p^*\ra(D\ra
C)^*$.

\begin{remark}\label{rem1}
{\em In fact, we have two ways to find out  $\NNIL_\square$ approximation
of a modal proposition. 

First: simply apply
$\NNIL_{\Box}$-algorithm to a modal proposition $A$ and compute
$A^*$. 

Second: let $B_1,\ldots,B_n$ be all
 boxed sub-formulae of $A$ which are not in the scope of any
other boxes. Let $A'(p_1,\ldots,p_n)$ be  unique non-modal
proposition such that $\{p_i\}_{1\leq i\leq n}$ are fresh atomic
variables not occurred in $A$ and $A=A'[p_1|B_1,\ldots,p_n|B_n]$.
Let $\rho(A):=(A')^*[p_1|B_1,\ldots,p_n|B_n]$. Then it is easy to
observe that $\IPC_{\Box}\vdash\rho(A)\lr A^*$.}
\end{remark}

The above defined algorithm is not deterministic, however by the
following Theorem, we know that $A^*$ is unique up to
$\IPC_{\Box}$ equivalence. The notation
$A\vartriangleright_{\IPC_{\Box},\NNIL_{\Box}}B$ ($A$,
$\NNIL_{\Box}$-preserves $B$) from \cite{Visser02}, means that
for each $\NNIL_{\Box}$ modal proposition $C$, if
$\IPC_{\Box}\vdash C\ra A$, then $\IPC_{\Box}\vdash C\ra B$, in
which $A, B$ are modal propositions.

\begin{theorem}\label{Theorem-NNIL Crucial Properties}
For each modal proposition $A$,
$\NNIL_{\Box}$ algorithm with  input $A$ terminates and
the output formula $A^*$, is an $\NNIL_{\Box}$ proposition such
that $\IPC_{\Box}\vdash A^*\ra A$.
\end{theorem}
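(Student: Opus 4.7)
The plan is to prove all three assertions of the theorem — termination, the $\NNIL_{\Box}$ shape of the output $A^*$, and the soundness $\IPC_{\Box}\vdash A^*\ra A$ — by a single strong induction on the lexicographic complexity measure $\mathfrak{o}A=(\mathfrak{d}A,\mathfrak{i}A,\mathfrak{c}A)$, which takes values in $\mathbb{N}^3$ and is therefore well-founded. It then suffices, clause by clause in the $\NNIL_{\Box}$-algorithm, to check that every recursive call is made on a formula of strictly smaller $\mathfrak{o}$ and that the inductive hypothesis on those calls can be assembled into the desired properties of $A^*$.

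For termination, clauses 1--3 trivially drop $\mathfrak{c}$ while leaving $\mathfrak{d}$ and $\mathfrak{i}$ non-increasing. Clauses 4.a and 4.b reshuffle the implication through a one-hole context $J(q)$ but strictly lower the count of outer logical connectives. Clauses 4.c.i--iii drop $\mathfrak{c}$ at once. The delicate case is 4.c.iv: the formulae $(B\downarrow D)\ra C$ have strictly smaller $\mathfrak{i}$ than $A$, because one implication is removed from $B=\bigwedge X$ and replaced by its consequent $F$, whose own top-level implications, if any, are strictly fewer than those in $X$. The recursive call on $A_0$ is explicitly guarded by the test $\mathfrak{o}A_0<\mathfrak{o}A$; in the alternative branch one must verify $\mathfrak{o}A_1<\mathfrak{o}A$ for $A_1=[B]'Z$, using that the substitution $B'=B[{B_1\ra B_2}\mid B_2]$ hidden inside $[B]'$ destroys at least one outer implication of $B$ and so lowers $\mathfrak{i}$.

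For the $\NNIL_{\Box}$ property, a direct induction shows that the algorithm only produces $A^*$ by combining inductively $\NNIL_{\Box}$ pieces with $\wedge$, $\vee$, boxes, or with implications whose antecedent is atomic or boxed (the key point being the rewritten clause 4.c.i, where the left side becomes $p^*$). Such implications have $\rho\le 1$, so $A^*\in\NNIL_{\Box}$. For the soundness $A^*\ra A$, clauses 1--3 are immediate from the induction hypothesis, while clauses 4.a and 4.b reduce to the $\IPC$-derivable schemata
\[
(B\ra J[q|D])\wedge(B\ra J[q|E])\ra\bigl(B\ra J[q|(D\wedge E)]\bigr)
\]
and its disjunctive dual, both proved by a side induction on the context $J$. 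The core of clause 4.c.iv is the propositional identity
\[
\bigwedge_{D\in X}\bigl((B\downarrow D)\ra C\bigr)\wedge[B]Z\;\ra\;(B\ra C),
\]
where $Z=\{E\mid E\ra F\in X\}\cup\{C\}$: assuming $B$, the disjunction $[B]Z$ delivers either $C$ directly or some antecedent $E$ of an implication $E\ra F\in X$, and in the latter case modus ponens with $B$ produces $F$ and collapses the goal to $(B\downarrow D)\ra C$ with $D=E\ra F$. The $[B]'$ variant is treated analogously, exploiting the substitution $B'$ built into the definition of $[B]'$.

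The main obstacle I expect is clause 4.c.iv, and within it the strict-decrease check $\mathfrak{o}A_1<\mathfrak{o}A$ in the ``else'' branch: careful book-keeping of which outer implications of $B$ are actually destroyed by the substitution, and confirming that the resulting drop is registered in the middle coordinate $\mathfrak{i}$ rather than only in the last coordinate $\mathfrak{c}$, is the one place where the argument is genuinely non-trivial. Once this is in hand, all remaining cases are routine but tedious verifications inside $\IPC_{\Box}$.
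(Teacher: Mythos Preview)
The paper does not actually prove this theorem: its entire proof is the citation ``See \cite[Thm.~4.5]{Sigma.Prov.HA}.'' Your proposal therefore cannot be compared against anything in the present paper, but it does outline the argument that underlies the cited result (and, behind it, Visser's original non-modal version in \cite{Visser02}): induction on $\mathfrak{o}A$, with clause 4.c.iv as the crux. As a plan it is correct and identifies the right pressure point.

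Two small corrections are worth recording. First, your claim that $(B\downarrow D)\ra C$ has strictly smaller $\mathfrak{i}$ than $A$ need not hold: if the implication $D=E\ra F$ being dropped from $X$ also occurs as a subformula of $C$ or of another member of $X$, then $|I((B\downarrow D)\ra C)|=|I(A)|$ and $\mathfrak{i}$ stays put. Termination survives because $\mathfrak{c}$ does strictly drop (at least the top $\ra$ of $D$ disappears) while $\mathfrak{d}$ and $\mathfrak{i}$ do not increase, so $\mathfrak{o}$ still decreases. Second, your treatment of the $[B]'$ branch is too vague. For soundness the clean route is the lemma $\IPC_\Box\vdash B\wedge[B]'G\ra G$; in the implication case $G=G_1\ra G_2$ this reduces to showing $B,G_1\vdash B'$, which follows because under $G_1$ one has $(G_1\ra G_2)\leftrightarrow G_2$ and hence $B\leftrightarrow B'=B[G\mid G_2]$ by substitution of equivalents. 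For the strict-decrease check $\mathfrak{o}A_1<\mathfrak{o}A$, the information that one is in the ``else'' branch tells you that some surface implication $G$ in $Z$ satisfies $|I(B\ra G)|=|I(A)|$, which forces $G$ (or enough of $I(C)$) to occur inside $B$; this is precisely what guarantees that the substitution in $B'$ genuinely deletes an implication and lowers $\mathfrak{i}$. You have located the difficulty correctly; filling it in is exactly the book-keeping carried out in \cite{Visser02} and \cite{Sigma.Prov.HA}.
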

\begin{proof}
See \cite[The.~4.5]{Sigma.Prov.HA}.
\end{proof}
\subsubsection*{$\TNNIL$-algorithm}

Here we define $A^+$ as $\TNNIL$-formula approximating $A$.
Informally speaking, to find $A^+$, we first compute $A^*$ and
then replace all outer boxed formula $\square B$ in $A$ by
$\square B^+$. To be more accurate, we  define $A^+$ by induction
on $\mathfrak{d}A$. Suppose that for all $B$ with
$\mathfrak{d}B<\mathfrak{d}A$, we have defined $B^+$. Suppose
that $A'(p_1,\ldots,p_n)$ and $\square B_1,\ldots,\square B_n$
such that $A=A'[p_1|\square B_1,\ldots,p_n|\square B_n]$ where
$A'$ is a non-modal proposition and $p_1,\ldots,p_n $ are fresh
atomic variables (not occurred in $A$). It is clear that
$\mathfrak{d}B_i<\mathfrak{d}A$ and then we can define
$A^+:=(A')^*[p_1|\square B_1^+,\ldots,p_n|\square B_n^+]$.

\begin{lemma}\label{Lemma-NNIL properties}
For any modal proposition $A$,
\begin{enumerate}
\item for all $\Sigma_1$-substitution $\sigma$ we have
$\HA\vdash\Box\sigma_{\sf HA}(A)\lr\Box\sigma_{\sf HA}(A^+)$ and  hence $\HA\vdash\sigma_{\sf HA}(A)$
 iff $\HA\vdash\sigma_{\sf HA}(A^+) $.
\item$i\GL\vdash A_1\ra A_2$ implies $i\GL\vdash A_1^+\ra A_2^+$, and
$i\K4\vdash A_1\ra A_2$ implies $i\K4\vdash A_1^+\ra A_2^+$.
\item $i\GL\vdash A_1\lr A_2$ implies $i\GL\vdash A_1^+\lr A_2^+$, and
$i\K4\vdash A_1\lr A_2$ implies $i\K4\vdash A_1^+\lr A_2^+$.
\end{enumerate}
\end{lemma}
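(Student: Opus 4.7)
The plan is to prove part (1) by induction on the depth $\mathfrak{d}A$, to derive part (2) from a $\TNNIL^-$-preservation principle, and to obtain part (3) by applying (2) to both directions. Throughout I use the decomposition $A=A'[p_1|\square B_1,\ldots,p_n|\square B_n]$ with $A'$ the non-modal skeleton and $\square B_i$ the outermost boxed subformulas, so that by definition $A^+=(A')^*[p_1|\square B_1^+,\ldots,p_n|\square B_n^+]$ and $\mathfrak{d}B_i<\mathfrak{d}A$.

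For (1), the base case $\mathfrak{d}A=0$ (so $A$ is non-modal and $A^+=A^*$) is exactly the crucial fact Theorem~\ref{Theorem-Main tool} imported from \cite{Sigma.Prov.HA}. For the inductive step, let $\widetilde A:=A'[p_1|\square B_1^+,\ldots,p_n|\square B_n^+]$. Applying the inductive hypothesis to each $B_i$ gives $\HA\vdash\sigma_{\sf HA}(\square B_i)\lr\sigma_{\sf HA}(\square B_i^+)$, whence $\HA\vdash\sigma_{\sf HA}(A)\lr\sigma_{\sf HA}(\widetilde A)$ and hence $\HA\vdash\square\sigma_{\sf HA}(A)\lr\square\sigma_{\sf HA}(\widetilde A)$. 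Because the $\NNIL_\square$-algorithm treats boxed subformulas as atoms, one checks (cf.\ Remark~\ref{rem1}) that the $\NNIL_\square$-approximation of $\widetilde A$ is precisely $A^+$; applying Theorem~\ref{Theorem-Main tool} to $\widetilde A$ then yields $\HA\vdash\square\sigma_{\sf HA}(\widetilde A)\lr\square\sigma_{\sf HA}(A^+)$, closing the induction. The ``hence'' clause then follows from provable $\Sigma_1$-completeness combined with the metamathematical soundness of $\HA$: $\HA\vdash\sigma_{\sf HA}(A)$ is equivalent to $\HA\vdash\square\sigma_{\sf HA}(A)$, and likewise for $A^+$.

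For (2), Theorem~\ref{Theorem-NNIL Crucial Properties} together with $\IPC_\square$-reasoning through the boxed substitutions yields $\IPC_\square\vdash A_1^+\ra A_1$, so $i\GL\vdash A_1\ra A_2$ gives $i\GL\vdash A_1^+\ra A_2$; since $A_1^+\in\TNNIL^-$, the desired $i\GL\vdash A_1^+\ra A_2^+$ follows from the $\TNNIL^-$-preservation property that $i\GL\vdash C\ra D$ implies $i\GL\vdash C\ra D^+$ whenever $C\in\TNNIL^-$. This preservation is proved by a parallel induction on $\mathfrak{d}D$ that reduces at each stage to the non-modal $\NNIL$-preservation of \cite{Visser02}; the $i\K4$ case is identical. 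Part (3) is then immediate by applying (2) to both implications of $A_1\lr A_2$.

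The principal obstacle is the base case of (1): the assertion $\HA\vdash\square\sigma_{\sf HA}(A)\lr\square\sigma_{\sf HA}(A^*)$ for non-modal $A$ is genuinely arithmetical, not propositional, and is precisely what Theorem~\ref{Theorem-Main tool} supplies. A secondary technical point is verifying that the $\NNIL_\square$-algorithm commutes up to $\IPC_\square$-equivalence with replacing outer boxed subformulas by their $+$-images, so that the $\NNIL_\square$-approximation of $\widetilde A$ can indeed be identified with $A^+$.
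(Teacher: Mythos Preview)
The paper does not give a proof of this lemma at all: it simply cites \cite[Corollary~4.8]{Sigma.Prov.HA}. So there is no ``paper's own proof'' to compare against, and your proposal is an attempt to reconstruct an argument. The overall architecture you sketch (induction on $\mathfrak{d}A$, reducing to the non-modal $\NNIL$ case) is the right shape, but there is a genuine error in how you ground the base case.

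You write that the base case of part (1) --- the statement $\HA\vdash\Box\sigma_{\sf HA}(A)\lr\Box\sigma_{\sf HA}(A^*)$ for non-modal $A$ --- is ``exactly the crucial fact Theorem~\ref{Theorem-Main tool}''. It is not. Theorem~\ref{Theorem-Main tool} says that for $A\in\TNNIL^\Box$ with $\LC\nvdash A$ there exists a $\Sigma_1$-substitution refuting $A$ in $\HA$; this is a \emph{completeness} statement for $\LC$ over $\TNNIL^\Box$, and it tells you nothing about the provable equivalence of $\Box\sigma(A)$ and $\Box\sigma(A^*)$. You repeat the error in the inductive step when you ``apply Theorem~\ref{Theorem-Main tool} to $\widetilde A$'': the theorem does not even take an arbitrary $\widetilde A$ as input, only $\TNNIL^\Box$ formulae, and its output is of an entirely different kind. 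What is actually needed at the base is the arithmetical $\Sigma_1$-preservativity of the $\NNIL$ approximation, i.e.\ $\sigma(A)\rhd_{\HA,\Sigma_1}\sigma(A^*)$ and conversely, which is supplied by \cite{Visser02} (and packaged in \cite{Sigma.Prov.HA}); this is distinct from, and logically prior to, Theorem~\ref{Theorem-Main tool}.

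A smaller point on part (2): you claim $\IPC_\Box\vdash A_1^+\ra A_1$. From $\IPC_\Box\vdash(A')^*\ra A'$ one gets $\IPC_\Box\vdash A_1^+\ra A'[p_i|\Box B_i^+]$, but passing to $A'[p_i|\Box B_i]=A_1$ requires $\Box B_i^+\ra\Box B_i$, which needs necessitation and $K$ --- so this holds in $i\K4$, not in bare $\IPC_\Box$. This is easily repaired, but the misattribution of Theorem~\ref{Theorem-Main tool} is not: you must replace that appeal by the correct $\Sigma_1$-preservativity input from \cite{Visser02,Sigma.Prov.HA}.
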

\begin{proof}
See \cite[Corollary~4.8]{Sigma.Prov.HA}.
\end{proof}

\subsubsection*{$\TNNIL^\Box$-algorithm}
\begin{corollary}\label{Corollary HA-NNIL properties}
There exists a $\TNNIL^\Box$-algorithm such that for any modal
proposition $A$, it halts and produces a proposition
$A^-\in\TNNIL^\Box$ such that $\IPC_\square\vdash A^+\ra A^-$.
\end{corollary}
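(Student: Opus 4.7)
The plan is simply to take $A^-:=A^+$ and to let the $\TNNIL^\Box$-algorithm be the $\TNNIL$-algorithm described earlier: then the implication $\IPC_\square\vdash A^+\to A^-$ is the identity, termination is inherited from Theorem \ref{Theorem-NNIL Crucial Properties} by induction on the modal depth $\mathfrak d A$, and everything reduces to verifying that the syntactic shape of $A^+$ already fits the class $\TNNIL^\Box$.

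The key step is an induction on $\mathfrak d A$ showing simultaneously (a) $B^+\in\TNNIL$ for every modal $B$, and (b) $A^+$ is already in the displayed form $C(\square D_1,\ldots,\square D_n)$ required by $\TNNIL^\Box$, with $C$ non-modal and $D_i\in\TNNIL$. In the base case $\mathfrak d A=0$, the formula $A$ is non-modal, and by the $\TNNIL$-algorithm $A^+$ coincides with the non-modal $\NNIL$ approximation $A^*$ produced by Theorem \ref{Theorem-NNIL Crucial Properties}; this fits (b) vacuously (with $n=0$) and fits (a) as soon as one observes that non-modal $\NNIL$ coincides with non-modal $\TNNIL$. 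For the induction step, decompose $A=A'[p_i|\square B_i]$ with $A'$ non-modal and $\mathfrak d B_i<\mathfrak d A$; the definition of $A^+$ then gives
\[
A^+=(A')^*[p_i\mid\square B_i^+],
\]
where $(A')^*$ is non-modal $\NNIL$ by Theorem \ref{Theorem-NNIL Crucial Properties} and $B_i^+\in\TNNIL$ by the inductive hypothesis. Taking $C:=(A')^*$ and $D_i:=B_i^+$ exhibits $A^+$ directly in the form demanded by the definition of $\TNNIL^\Box$, which proves (b).

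For (a) one uses the same decomposition: each implication occurring in the non-modal $\NNIL$ formula $(A')^*$ has, by $\rho\le 1$, an implication-free left side; substituting $\square B_i^+$ for the atoms $p_i$ turns such a left side into a Boolean combination of atoms and boxed formulas, all of which lie in ${\sf NOI}$. Combined with $\square B_i^+\in\TNNIL$ (from the inductive hypothesis plus closure of $\TNNIL$ under $\square$) and closure of $\TNNIL$ under $\vee,\wedge$ and under implications with ${\sf NOI}$ left sides, this gives $A^+\in\TNNIL$ and closes the induction.

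With (a) and (b) in hand, setting $A^-:=A^+$ produces the required member of $\TNNIL^\Box$ and makes $\IPC_\square\vdash A^+\to A^-$ automatic. The only point that needs any care is the bookkeeping identification of non-modal $\NNIL$ with non-modal $\TNNIL$, which is a one-line unpacking: for a box-free formula, ${\sf NOI}$ forbids every implication, which is exactly what $\rho\le 1$ forces on the left sides. I do not foresee any genuine obstacle, only the standard verification that the syntactic form $(A')^*[p_i|\square B_i^+]$ delivered by the $\TNNIL$-algorithm is exactly the one required by $\TNNIL^\Box$.
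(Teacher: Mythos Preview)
Your argument is correct: you verify directly that $A^+$ already lies in $\TNNIL^-$ (which is what the paper writes as $\TNNIL^\Box$), and then take $A^-:=A^+$. The induction on $\mathfrak d A$ showing simultaneously that $A^+\in\TNNIL$ and that $A^+$ has the displayed shape $C(\square D_1,\ldots,\square D_n)$ with $C$ non-modal and $D_i\in\TNNIL$ is sound, and the identification of non-modal $\NNIL$ with non-modal $\TNNIL$ is exactly the one-line check you describe.

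This is, however, not the paper's construction. The paper writes $A=B(\square C_1,\ldots,\square C_n)$ with $B$ non-modal and sets $A^-:=B(\square C_1^+,\ldots,\square C_n^+)$, leaving the outer non-modal shell $B$ untouched; it then observes that $A^+=(A^-)^*$ and invokes \Cref{Theorem-NNIL Crucial Properties} to get $\IPC_\square\vdash A^+\to A^-$. So your $A^-$ and the paper's $A^-$ are genuinely different formulas (yours applies $(\cdot)^*$ to the outer shell, the paper's does not). Your route is shorter for the corollary as stated, but the paper's choice is not accidental: because the paper's $A^-$ differs from $A$ only inside boxes, one gets the two-sided equivalences $\HA\vdash\sigma_{\sf HA}(A)\leftrightarrow\sigma_{\sf HA}(A^-)$ (\Cref{Lemma HA-NNIL properties minus}) and $\LCS\vdash A^\square\leftrightarrow(A^\square)^-$ (\Cref{Corollary-+algorithm}) by substituting box-level equivalences into $B$. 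With your choice $A^-=A^+$ those later steps would require separate arguments, since the outer $(\cdot)^*$ is only a one-way implication in $\IPC_\square$.
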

\begin{proof}
Let $A:=B(\square C_1,\ldots,\square C_n)$, and $B(p_1,\ldots,p_n)$ is non-modal.
apparently such $B$ exists. Then define $A^-:=B(\square C_1^+,\ldots,\square C_n^+)$.
Now definition of $A^+$ 
 implies  $A^+=(A^-)^*$ and
 hence   \Cref{Theorem-NNIL Crucial Properties}
 implies that $A^-$ has desired property.
\end{proof}

\begin{lemma}\label{Lemma HA-NNIL properties minus}
For each modal proposition $A$ and  $\Sigma_1$-substitution
$\sigma$,  $\HA\vdash \sigma_{{\sf HA}}A\lr\sigma_{{\sf HA}}A^-$.
\end{lemma}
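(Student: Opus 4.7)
The plan is to unpack the definition of $A^-$ from Corollary \ref{Corollary HA-NNIL properties} and reduce the claim directly to Lemma \ref{Lemma-NNIL properties}.1 applied to each outermost boxed subformula.

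First, I would write $A$ in the canonical form $A = B(\square C_1,\ldots,\square C_n)$, where $B(p_1,\ldots,p_n)$ is non-modal and $\square C_1,\ldots,\square C_n$ enumerate the outermost boxed subformulae of $A$. By the construction given in the proof of Corollary \ref{Corollary HA-NNIL properties}, $A^- = B(\square C_1^+,\ldots,\square C_n^+)$. Because $\sigma_{\sf HA}$ commutes with the propositional connectives and $B$ is purely propositional,
\[
\sigma_{\sf HA}(A) \;=\; B\bigl(\sigma_{\sf HA}(\square C_1),\ldots,\sigma_{\sf HA}(\square C_n)\bigr), \quad
\sigma_{\sf HA}(A^-) \;=\; B\bigl(\sigma_{\sf HA}(\square C_1^+),\ldots,\sigma_{\sf HA}(\square C_n^+)\bigr).
\]

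Next, by the definition of $\sigma_{\sf HA}$ on boxed formulae, $\sigma_{\sf HA}(\square C_i)$ is (provably equivalent in $\HA$ to) $\square \sigma_{\sf HA}(C_i)$, and similarly for $C_i^+$. Applying Lemma \ref{Lemma-NNIL properties}.1 to each $C_i$ yields
\[
\HA\vdash \sigma_{\sf HA}(\square C_i)\lr \sigma_{\sf HA}(\square C_i^+) \qquad (1\le i\le n).
\]

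Finally, since $B$ is a propositional combination of its atomic arguments, $\HA$ proves (via pure propositional reasoning on the arguments $p_i$) that equivalent inputs yield an equivalent output; substituting the $n$ equivalences above gives
\[
\HA\vdash \sigma_{\sf HA}(A)\lr \sigma_{\sf HA}(A^-),
\]
which is the desired statement. There is no real obstacle: the whole argument is a direct application of Lemma \ref{Lemma-NNIL properties}.1 plus propositional replacement inside $B$, which is why this lemma is stated as a corollary of the earlier work rather than requiring a new idea.
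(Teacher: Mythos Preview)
Your proposal is correct and matches the paper's approach exactly: the paper's proof reads simply ``Use definition of $(.)^-$ and \Cref{Lemma-NNIL properties}.1,'' and you have unpacked precisely that argument. One cosmetic remark: $\sigma_{\sf HA}(\square C_i)$ is \emph{equal} to $\Box\sigma_{\sf HA}(C_i)$ by the very definition of $\sigma_{\sf HA}$ (Definition~\ref{Definition-Arithmetical substitutions}), not merely provably equivalent, so that step needs no justification at all.
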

\begin{proof}
Use definition of $(.)^-$ and \Cref{Lemma-NNIL properties}.1.
\end{proof}

\begin{remark}
{\em Note that $\LC\vdash A\lr B$ does not imply $\LC\vdash
A^+\lr B^+$. A counter-example is $A:=\neg\neg p$ and
$B:=\neg\boxdot(\neg p)$. We have $A^+=A^*=p$ and $\LC\vdash
B^+\lr(\square\neg p\ra p)$. Now one can use Kripke models to
show $\LC\nvdash\neg\neg p\ra(\square\neg p\ra p)$.}
\end{remark}

\begin{remark}\label{Remark-New def for TNNIL algorithm}{\em
In the $\NNIL_{\Box}$-algorithm,
 if we replace the
operation $(\cdot)^*$ by $(\cdot)^\dag$, and change the step 1 to

\noindent 1. $A^\dag:= A$, if $A$ is atomic, and $(\square
B)^\dag:=\square B^\dag$,

\noindent then the new algorithm also halts, and for any modal
proposition $A$, we have $i\K4\vdash A^\dag\lr A^+$.}
\end{remark}

\subsection{Box translation and  propositional theories}

Following Visser's definition of the notion of a {\em base} in
arithmetical theories \cite{Visser82}, we define
\begin{definition}
A modal theory $T$ is called to be {\em closed under
box-translation} if for every proposition $A$, $T\vdash A$ implies
$T\vdash A^\square$.
\end{definition}

\begin{proposition}\label{Proposition-propositional properties of Box translation}
For  arbitrary subset $X$ of $\{\CP,\CP_a,\L\}$, $i\K4+X$ is
closed under  box-translation.
\end{proposition}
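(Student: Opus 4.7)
The plan is to proceed by induction on the length of a $T$-proof, where $T := i\K4 + X$, establishing $T \vdash A^\square$ whenever $T \vdash A$. The engine behind the induction is an auxiliary lemma that lets us internalize any box-translated formula under a $\square$: namely, $i\K4 \vdash A^\square \ra \square A^\square$ for every modal proposition $A$. This sub-lemma is proved by structural induction on $A$. For atomic $p$ we have $p^\square = p \wedge \square p$, so axiom 4 gives $\square p \ra \square\square p$ and then K yields $\square p \wedge \square\square p \ra \square(p \wedge \square p)$. For implication, $(A \ra B)^\square$ already contains $\square(A^\square \ra B^\square)$ explicitly as a conjunct, and axioms 4 and K combine to produce the full $\square$ of both conjuncts. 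The $\square$-case is axiom 4 itself, and the cases of $\bot$, $\wedge$, $\vee$ are routine from the inductive hypothesis.

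For the main induction, the two rules of $T$ are handled immediately: modus ponens is inherited because $(A \ra B)^\square$ contains $A^\square \ra B^\square$ as a conjunct, and necessitation is preserved by $(\square A)^\square = \square A^\square$. For each axiom schema I would verify that its box-translation is $T$-provable. The K- and 4-translations reduce, after unfolding, to applications of K, 4, and the distribution of $\square$ over $\wedge$. The L-translation, after stripping the outer $\square$, exposes $\square(\square A^\square \ra A^\square)$ inside, to which L applies directly to give $\square A^\square$. Crucially, the translations of $\CP$ and $\CP_a$ unfold to $(A^\square \ra \square A^\square) \wedge \square(A^\square \ra \square A^\square)$ and $(p^\square \ra \square p^\square) \wedge \square(p^\square \ra \square p^\square)$ respectively; these are already consequences of the auxiliary lemma together with necessitation, so one does not actually need $\CP$ or $\CP_a$ itself to derive its own box-translation.

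The only non-routine class of axioms is the intuitionistic tautologies of $\IPC_\square$, for which I would run an inner induction showing that $\Gamma \vdash_{\IPC_\square} B$ implies $\Gamma^\square \vdash_{i\K4} B^\square$, where $\Gamma^\square := \{C^\square : C \in \Gamma\}$. The main technical obstacle is the $\ra$-introduction step: from $\Gamma^\square, A^\square \vdash_{i\K4} B^\square$ one gets $\Gamma^\square \vdash_{i\K4} A^\square \ra B^\square$ easily, but to assemble $(A \ra B)^\square$ one also needs the $\square$ of this implication under the same hypotheses. Here the auxiliary lemma earns its keep: since each $C \in \Gamma^\square$ implies $\square C$, the derivation can be lifted to one from $\{\square C : C \in \Gamma^\square\}$, whose conclusion may then be necessitated and K-distributed to yield $\Gamma^\square \vdash_{i\K4} \square(A^\square \ra B^\square)$ as required.
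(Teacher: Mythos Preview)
Your proof is correct and follows essentially the same strategy as the paper's: induction on proof length, checking that each axiom's box-translation is derivable and that the rules preserve box-translated provability. The paper organizes it as three steps (close $\IPC_\square$ first, then the $\K4$-axioms, then the principles in $X$) and leaves the verification of the $\IPC$ axioms as ``routine''; you instead isolate the auxiliary lemma $i\K4\vdash A^\square\ra\square A^\square$ explicitly, which is exactly what is needed to make that routine check go through, and you run the $\IPC$ part via natural deduction with contexts rather than a Hilbert system. Your observation that $(\CP)^\square$ and $(\CP_a)^\square$ are already $i\K4$-theorems (so those principles need not be invoked to prove their own translations) is a pleasant sharpening the paper does not spell out.
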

\begin{proof}
The proof can be carried out in three steps:

\vspace{.1in}

\noindent 1. For any proposition $A$ first we show that
$\IPC_\square\vdash A$ implies $i\K4\vdash A^\square$. This can
be done by a routine induction on the length of the proof in
$\IPC$. Note that for any axiom $A$ of $\IPC$, we have
$i\K4\vdash A^\square$. As for the rule of modus ponens, suppose
that $\IPC_\square\vdash A$ and $\IPC_\square\vdash A\ra B$. By
induction hypothesis, then $i\K4\vdash A^\square$ and
$i\K4\vdash(A^\square\ra B^\square)\wedge\square(A^\square\ra
B^\square)$ and so $i\K4\vdash B^\square$.

\vspace{.1in}

\noindent 2. Next observe that
$$(\square A\ra\square\square A)^\square=\square A^\square\ra\square\square A^\square$$ and also
$$i\K4\vdash[(\square(A\ra B)\wedge \square A)\ra\square B]^\square\lr
(\square(A^\square\ra B^\square)\wedge \square A^\square)\ra\square B^\square.$$

\vspace{.1in}

\noindent 3. We observe that for any axiom
$A\in X$, $i\K4 + X\vdash A^\square$.

\end{proof}

The following two lemmas will be used in the
proof of 
\Cref{Theorem-NNIL approximation is propositionally equivalent2}.

\begin{lemma}\label{Lemma-property for bracket}
For any modal propositions $A, A'$ and $B$, and any propositional
modal theory $T$ containing $\IPC_\square$,
\begin{enumerate}
\item $i\K4+\square
A^\square\vdash([A]B)^\square\lr([A^\square]B^\square)$.
\item $T\vdash A\lr A'$ implies $T\vdash [A]B\lr [A']B$.
\end{enumerate}
\end{lemma}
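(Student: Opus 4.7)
The plan is to prove both parts by induction on the complexity of $B$, using the recursive definition of $[\cdot]B$ from \Cref{definition-braket}. In part (1), the atomic cases (including $\top, \bot$), the cases $B = B_1 \wedge B_2$ and $B = B_1 \vee B_2$, and the case $B = \square B_1$ are routine: the operation commutes through the connective on both sides and the inductive hypothesis closes the step; for the boxed case both sides simply collapse to $\square B_1^\square$. The only substantive case is $B = B_1 \to B_2$.

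For that case, set $X := B_1^\square \to B_2^\square$, so $(B_1 \to B_2)^\square = X \wedge \square X$. Unfolding gives the left-hand side as $(A^\square \to (X \wedge \square X)) \wedge \square(A^\square \to (X \wedge \square X))$, while the right-hand side $[A^\square](X \wedge \square X)$ simplifies to $(A^\square \to X) \wedge \square X$ --- using $[A^\square]\square X = \square X$ and, since $X$ is itself an implication, $[A^\square]X = A^\square \to X$. Proving LHS $\to$ RHS uses $\square A^\square$ essentially: the conjunct $A^\square \to X$ is immediate, and $\square X$ drops out of $\square A^\square$ together with $\square(A^\square \to (X \wedge \square X))$ via the $K$-axiom of $i\K4$. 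The converse direction does not even need $\square A^\square$: transitivity of $\square$ in $i\K4$ promotes $\square X$ to $\square \square X$, and a standard $K$-computation then produces $\square(A^\square \to X) \wedge \square(A^\square \to \square X)$, which combine into $\square(A^\square \to (X \wedge \square X))$; the remaining conjunct $A^\square \to (X \wedge \square X)$ follows immediately from $A^\square \to X$ and $\square X$.

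For part (2) the induction is even simpler: in every case except $B = B_1 \to B_2$ the formula $A$ does not occur in $[A]B$ at all, so the IH closes each step; for $B = B_1 \to B_2$, both sides unfold directly to $A \to (B_1 \to B_2)$ and $A' \to (B_1 \to B_2)$, and $T \vdash A \leftrightarrow A'$ yields interderivability in $T$.

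The main obstacle is the bookkeeping in the implication case of (1): one has to recognize that the hypothesis $\square A^\square$ is exactly what is needed to collapse the inner $\square$ produced inside the box-translation of $A \to (B_1 \to B_2)$ against the single outer $\square X$ that $[A^\square]$ emits, and, symmetrically, that transitivity of $\square$ in $i\K4$ alone is enough to rebuild the extra $\square$-layer on the left from the single $\square X$ on the right.
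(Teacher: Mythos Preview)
Your proof is correct and follows essentially the same approach as the paper: induction on the complexity of $B$, with all cases routine except the implication case, where one unfolds both sides and checks the equivalence in $i\K4+\square A^\square$. Your treatment of the implication case is in fact more detailed than the paper's (which simply states the two unfoldings and leaves the verification as ``easy to observe''), and your observation that the direction $[A^\square]B^\square\to([A]B)^\square$ does not require $\square A^\square$ is a nice refinement.

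One minor quibble of phrasing: in part~(2) you write that ``in every case except $B=B_1\to B_2$ the formula $A$ does not occur in $[A]B$ at all.'' Taken literally this is false for the conjunction and disjunction cases, since $A$ may well appear inside $[A]B_1$ or $[A]B_2$ via the recursion; what is true is that in those cases the defining clause for $[A]B$ simply recurses without introducing $A$ at the top level, so the induction hypothesis applies directly. Your continuation ``so the IH closes each step'' makes clear you understand this, so the argument is sound --- just tighten the wording.
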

\begin{proof}
Proof of both parts are by induction on the complexity of $B$:
\begin{enumerate}
\item The only non-trivial case is when $B$ is an implication. Let
$B:=C\ra D$. By Definitions \ref{definition-braket} and
\ref{Definition-Box translation},
$$([A](C\ra D))^\square =\boxdot(A^\square\ra((C^\square\ra
D^\square)\wedge\square(C^\square\ra D^\square)))$$
and also
$$[A^\square](C\ra D)^\square=(A^\square\ra(C^\square\ra
D^\square))\wedge\square(C^\square\ra D^\square).$$
Now it is easy
to observe  that 
$$i\K4+\square A^\square\vdash([A](C\ra
D))^\square\lr([A^\square](C\ra D)^\square).$$
\item Similar to the first item.
\end{enumerate}
\end{proof}

\noindent\textbf{Notation.} In the sequel of  paper, we use $A\equiv B$
as a shorthand for $i\K4\vdash A\lr B$.

\begin{lemma}\label{Lemma-technical property}
Let $A=B\ra C$ be a modal proposition such that $B=\bigwedge X$ and $C=\bigvee Y$,
where $X$ is a set of implications and $Y$ is a set of
atomic, boxed or implicative propositions. Then
$$
(A^\square)^+\equiv\boxdot\left(  \bigwedge_{E\ra F\in
X}\square\left (\left(E\ra
F\right)^\square\right)^+\ra\left(\bigwedge\left\lbrace\left
(\left(B\downarrow D\ra C\right)^\square\right)^+ \mid D\in
X\right
\rbrace\wedge\left(\left([B]Z\right)^\square\right)^+\right)
\right)$$
where $Z=\{E|E\ra F\in X\}\cup\{C\}$.
\end{lemma}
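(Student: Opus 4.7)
The plan is to carefully unfold $A^\square$ via Definition \ref{Definition-Box translation} and apply the $\NNIL_{\square}$-algorithm to the resulting non-modal skeleton. By Definition \ref{Definition-Box translation}, $A^\square = P \wedge \square P$ where $P := B^\square \to C^\square$. From the definition of $(\cdot)^+$ combined with steps 1 and 2 of the $\NNIL_{\square}$-algorithm, the operation $(\cdot)^+$ distributes over conjunction and commutes with $\square$ (indeed $(\square H)^+ = \square H^+$ directly from the definition). Hence $(A^\square)^+ \equiv \boxdot P^+$, accounting for the outer $\boxdot$ in the conclusion. The task reduces to matching $P^+$ against the body under $\boxdot$.

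Expanding $B^\square = \bigwedge_{E \to F \in X}\bigl((E^\square \to F^\square) \wedge \square(E^\square \to F^\square)\bigr)$, the antecedent of $P$ contains boxed subformulas $\square(E^\square \to F^\square)$, which the algorithm treats as atoms. Repeated application of step 4.c.i pulls them out of the antecedent, giving
\[
P^+ \equiv \bigwedge_{E\to F \in X}\square(E^\square \to F^\square)^+ \to (P_0)^+,
\]
where $P_0 := \bigwedge_{E\to F \in X}(E^\square \to F^\square) \to C^\square$. Using distribution of $(\cdot)^+$ over $\wedge$ and the $i\K4$-axiom $\square A \to \square\square A$, one verifies $\square(E^\square \to F^\square)^+ \equiv \square((E \to F)^\square)^+$, so the antecedent already matches the one in the conclusion.

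The proposition $P_0 = \bigwedge X' \to C^\square$ with $X' = \{E^\square \to F^\square : E \to F \in X\}$ fits case 4.c.iv of the algorithm exactly. The algorithm then produces
\[
(P_0)^+ \equiv \bigwedge_{D' \in X'}\bigl((B' \downarrow D' \to C^\square)^+\bigr) \wedge ([B']Z')^+,
\]
with $B' = \bigwedge X'$ and $Z' = \{E^\square : E\to F \in X\} \cup \{C^\square\}$. Two identifications remain, both carried out modulo the outer $\boxdot$: (a) $(B' \downarrow D' \to C^\square)^+ \equiv ((B \downarrow D \to C)^\square)^+$ for $D' = E^\square \to F^\square$ and $D = E\to F$, obtained by unfolding $(B \downarrow D \to C)^\square$ via Definition \ref{Definition-Box translation} and absorbing the extra $\square(E'^\square \to F'^\square)$ conjuncts appearing in $(B \downarrow D)^\square$ using the outer antecedent $\bigwedge \square(E^\square \to F^\square)^+$ together with $H^+ \to H$ and the K-axiom; and (b) $([B']Z')^+ \equiv (([B]Z)^\square)^+$, which uses Lemma \ref{Lemma-property for bracket}.1 stating $([B]C')^\square \equiv [B^\square]C'^\square$ modulo $\square B^\square$, with the hypothesis again supplied by the outer $\boxdot$.

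The main obstacle is the matching in the final stage: carefully tracking how the outer $\boxdot$ delivers the $\square$-hypotheses needed to absorb the superfluous box-translation conjuncts, and applying Lemma \ref{Lemma-property for bracket} correctly to each disjunct of $Z$. The dichotomy in step 4.c.iv between $[B]Z$ and $[B]'Z$ is harmless here, since after box translation the complexity measure drops sufficiently for $[B]Z$ to apply (and in any case both choices yield $i\K4$-equivalent approximations once placed under the outer $\boxdot$).
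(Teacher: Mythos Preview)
Your argument has a genuine gap at the step where you claim ``$P_0 = \bigwedge X' \to C^\square$ fits case 4.c.iv of the algorithm exactly.'' It does not. Recall that $C = \bigvee Y$ with $Y$ a set of atomic, boxed, or implicative propositions; hence $C^\square = \bigvee_{H\in Y} H^\square$, and for atomic $H$ we have $H^\square = H \wedge \square H$, while for implicative $H = E\to F$ we have $H^\square = (E^\square\to F^\square)\wedge \square(E^\square\to F^\square)$. Thus, unless every member of $Y$ is boxed, $C^\square$ contains \emph{outer conjunctions}, so the algorithm is in case 4.a, not in case 4.c. For the same reason, your earlier application of 4.c.i to $P = B^\square \to C^\square$ is premature: case 4.c requires the consequent to be a disjunction of atoms, boxed formulas, or implications, which $C^\square$ is not.

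This is exactly what the paper's proof handles with the $C^I$ construction: for each $I\subseteq Y$ one forms a conjunct-free disjunction $C^I$ and shows $C^\square \equiv \bigwedge_{I\subseteq Y} C^I$ (equation \eqref{Equation-C^box=bigwedge C^I...}). Repeated 4.a then yields $(B^\square \to C^\square)^+ = \bigwedge_{I}(B^\square \to C^I)^+$; only now do 4.c.i and 4.c.iv apply to each $B^\square \to C^I$. The nontrivial content of the lemma is the reassembly step (passing from \eqref{Equation-Align4} to \eqref{Equation-Align5}): one must check that after running 4.c.iv on every $I$ separately, the conjunction over $I$ of $(B'\downarrow D' \to C^I)^+$ and of $([B']Z^I)^+$ collapses back to $(B'\downarrow D'\to C^\square)^+$ and $([B']Z')^+$ respectively, using \eqref{Equation-C^box=bigwedge C^I...} again. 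Your proposal skips both the decomposition and the reassembly, so as written it does not establish the claimed equivalence.
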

\begin{proof}
To simplify notations, Let us indicate
\begin{itemize}
\item  the sets  of all atomic and boxed propositions by {\sf At} and {\sf Bo}, respectively,
\item $X':=\{E^\square\ra F^\square\mid E\ra F\in X\}$,
\item $Z':=Z^\square=\{E^\square\mid E\ra F\in X\}\cup\{C^\square\}$,
\item $B':=\bigwedge X'$,
\item   for any $I\subseteq Y$,  $C^I:=\bigvee_{E\ra F\in I}\square(E^\square\ra F^\square)
\vee\bigvee_{E\in I\cap {\sf At}}\square E
\vee\bigvee_{E\ra F\in Y\setminus I}(E^\square\ra F^\square)
\\  \hspace*{1.3in}
\vee\bigvee_{E\in (Y\setminus I)\cap {\sf At}}E
\vee\bigvee_{E\in {\sf Bo}\cap Y} E^\square$,
\item and $Z^I:=\{ E^\square\mid E\ra F\in X\}\cup\{C^I\}$.
\end{itemize}
By repeated use of distributivity of conjunction over disjunction, 
which is valid in \IPC, we have
\begin{equation}\label{Equation-C^box=bigwedge C^I...}
C^\square\equiv\bigwedge_{I\subseteq Y}C^I  \quad \text{and}
\quad Z^\square\equiv\bigwedge_{I\subseteq Y}Z^I
\end{equation}
Note that $A^\square=(B^\square\ra
C^\square)\wedge\square(B^\square\ra C^\square)$, and then by
definition of $(\cdot)^+$,
\begin{center}
$(A^\square)^+ =(B^\square\ra
C^\square)^+\wedge\square(B^\square\ra C^\square)^+$.
\end{center}
Now we compute  the left conjunct:
\begin{align}
\label{Equation-Align1}& \left(B^\square \ra  C^\square \right) ^+=\bigwedge_{I\subseteq Y}\left(B^\square\ra C^I\right) ^+ \\
\label{Equation-Align2}  & \equiv\bigwedge_{I\subseteq Y}\left( \bigwedge_{E\ra F\in X}\square  \left(E^\square\ra F^\square\right)^+\ra\left(\left(\bigwedge_{E\ra F\in X } \left(E^\square\ra F^\square\right) \right)\ra C^I\right)^+\right)   \\
\label{Equation-Align3} & \equiv\bigwedge_{E\ra F\in X}\square\left(\left( E\ra F\right)^\square\right)^+\ra  \bigwedge_{I\subseteq Y}\left( B'\ra C^I\right)^+   \\
\label{Equation-Align4}   &\equiv\bigwedge_{E\ra F\in X}\square\left(\left(E\ra F\right)^\square\right)^+\ra\bigwedge_{I\subseteq Y}\left(\bigwedge\left\lbrace\left(B'\downarrow D'\ra C^I\right)^+ \mid D'\in X'\right\rbrace\wedge\left([B']Z^I\right)^+\right)   \\
\label{Equation-Align5}  & \equiv\bigwedge_{E\ra F\in
X}\square\left(\left(E\ra
F\right)^\square\right)^+\ra\left(\bigwedge\left\lbrace\left(B'\downarrow
D'\ra C^\square\right)^+ \mid D'\in
X'\right\rbrace\wedge\left([B']Z'\right)^+\right)
\end{align}
and hence
\begin{equation}\label{Equation-Align6}
\left(A^\square\right)^+ \equiv  \boxdot\left(\bigwedge_{E\ra
F\in X}\square\left(\left(E\ra
F\right)^\square\right)^+\ra\left(\bigwedge\left\lbrace\left(\left(B\downarrow
D\ra C\right)^\square\right)^+ \mid D\in
X\right\rbrace\wedge\left([B']Z'\right)^+\right)\right)
\end{equation}
Note that \ref{Equation-Align1} and \ref{Equation-Align2} hold by
$\NNIL_\square$-algorithm, \ref{Equation-Align3} holds by properties of
$\IPC_\square$, \ref{Equation-Align4} holds by $\TNNIL$-algorithm,
\ref{Equation-Align5} holds by $\TNNIL$-algorithm and equation
\ref{Equation-C^box=bigwedge C^I...}, and finally equation
\ref{Equation-Align6} is derived from \ref{Equation-Align5} by
deduction in $i\K4$ and $\TNNIL$-algorithm.  Now it is enough to
show that the last formula is equivalent to the following one in
$i\K4$:
\begin{equation}\label{Equation-Align7}
\boxdot\left(\bigwedge_{E\ra F\in X}\square\left(\left(E\ra
F\right)^\square\right)^+\ra\left(\bigwedge\left\lbrace\left(\left(B\downarrow
D\ra C\right)^\square\right)^+ \mid D\in
X\right\rbrace\wedge\left(\left([B]Z\right)^\square\right)^+\right)
\right)
\end{equation}
To show this, it is enough to show
$$i\K4\vdash\bigwedge_{E\ra F\in X}\square\left(\left(E\ra
F\right)^\square\right)^+\ra\left(\left(\left([B]Z\right)^\square\right)^+\lr\left([B']Z'\right)^+\right).$$
Then by \Cref{Lemma-NNIL properties}.2, it is enough
to show $i\K4\vdash\bigwedge_{E\ra F\in X}\square(E\ra
F)^\square\ra(([B]Z)^\square\lr[B']Z')$. Since $\bigwedge_{E\ra
F\in X}\square(E\ra F)^\square \equiv \square B^\square$, then
it is enough to show $i\K4+\square
B^\square\vdash([B]Z)^\square\lr[B']Z'$. Now, by Lemma
\ref{Lemma-property for bracket}.1, we have $i\K4+\square
B^\square\vdash ([B]Z)^\square\lr[B^\square]Z^\square$. Hence we
should show $i\K4+\square
B^\square\vdash[B^\square]Z^\square\lr[B']Z'$. We have
$Z'=Z^\square$ and $i\K4+\square B^\square\vdash B^\square\lr
B'$. Then by Lemma \ref{Lemma-property for bracket}.2,
$i\K4+\square B^\square\vdash[B^\square]Z^\square\lr[B']Z'$.
\end{proof}

\subsection{Axiomatizing $\TNNIL$-algorithm}
In this section, we introduce the 
 axiom set $X$ 
such that $i\K4+X\vdash
(A^\square)^-\lr A^\square$. Note that we may simply choose 
$X:=\{(A^\square)^-\lr A^\square\ |\ A\text{ is arbitrary
proposition} \}$. However,  we want to reduce $X$ to some smaller
efficient set of formulae.

We use some modal variant of Visser's $\brt_\sigma$
in \cite{Visser02}.  It is exactly the same as the relation $ \brt $ in \cite{Sigma.Prov.HA} (sec.~4.3) except for item B2, which is a little bit different: 
\begin{itemize}
\item B$2'$. Let $X$ be a set of implications, $B:=\bigwedge X$ and $A:=B\ra C$.
Also assume that $Z:=\{E | E\ra F\in X\}\cup \{C\}$. Then $A \brt [B]Z $,
\end{itemize}
 The relation $\brt^*$ is defined to be the
smallest relation on modal propositional sentences satisfying:
\begin{itemize}
 \item A1. If $i\K4\vdash A\ra B$, then $A\brt^* B$,
 \item A2. If $A\brt^* B$ and $B\brt^* C$, then $A\brt^* C$,
 \item A3. If $C\brt^* A$ and $C\brt^* B$, then $C\brt^* A\wedge B$,
 \item A4. If $A\brt^* B$, then $\bo A\brt^* \bo B$,
 \item B1. If $A\brt^* C$ and $B\brt^* C$, then $A\vee B\brt^* C$,
 \item B2. Let $X$ be a set of implications, $B:=\bigwedge X$ and $A:=B\ra C$.
 Also assume that $Z:=\{E | E\ra F\in X\}\cup \{C\}$. Then $A\wedge\square B\brt^* [B]Z $,
 \item B3. If $A\brt^* B$, then $p\ra A\brt^* p\ra B$, in which $p$ is atomic or boxed.
\end{itemize}

$A\blrt^* B$ means $A\brt^* B$ and $B\brt^* A$. 
 
\begin{definition}\label{definition of the main theory}
We  define 
\begin{center}
$\LCS:=i\GL+\CP+\{ {\square A\ra\square B}|{ A\brt^* B}\}$. 
\end{center}
\end{definition}

\noindent Note that the $\Sigma_1$-provability logic of $\HA$ is proved  in \cite{Sigma.Prov.HA} to be 
\begin{center} 
 $\lles:=i\GL + \CP_{a}+ {\sf Le}^{+} +   \{{\square A\ra\square B}| {A\brt B}\}$, 
\end{center}
 in which $\CP_{a}$ is
the Completeness Principle restricted to atomic propositions.

\begin{lemma}\label{Lemma-Preservativity is closed under box translation}
For any propositional modal sentences $A, B$, $A\brt^* B$ implies
$A^\square\brt^* B^\square$.
\end{lemma}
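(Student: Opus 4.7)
The approach is induction on the derivation of $A\brt^* B$ according to the eight clauses A1--A4, B1, B2, B3. The rules A2, A3, A4 and B1 follow immediately from the inductive hypothesis together with the identities $(A\wedge B)^\square=A^\square\wedge B^\square$, $(A\vee B)^\square=A^\square\vee B^\square$ and $(\square A)^\square=\square A^\square$. For A1, if $i\K4\vdash A\ra B$ then \Cref{Proposition-propositional properties of Box translation} gives $i\K4\vdash(A\ra B)^\square$, which unfolds to $(A^\square\ra B^\square)\wedge\square(A^\square\ra B^\square)$; in particular $i\K4\vdash A^\square\ra B^\square$, so a second appeal to A1 finishes.

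For B3 we must show that $A^\square\brt^* B^\square$ implies $(p\ra A)^\square\brt^*(p\ra B)^\square$ for $p$ atomic or boxed, where $(p\ra A)^\square=(p^\square\ra A^\square)\wedge\square(p^\square\ra A^\square)$. If $p$ is boxed then $p^\square$ is still boxed, so B3 applied to the inductive hypothesis handles the non-modal conjunct, A4 handles the $\square$-conjunct, and A3 glues them. If $p$ is atomic then $p^\square=p\wedge\square p$ is neither atomic nor boxed, so we use the $i\K4$-equivalence $(p\wedge\square p)\ra D\equiv p\ra(\square p\ra D)$ (invoking A1 in both directions) and apply B3 twice, first with the boxed atom $\square p$ and then with the atomic $p$, before closing with A4 and A3.

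The substantial case is B2, a basis rule with no inductive hypothesis. Let $X$ be the given set of implications, $B=\bigwedge X$, $A=B\ra C$ and $Z=\{E\mid E\ra F\in X\}\cup\{C\}$. Introduce the primed data $X':=\{E^\square\ra F^\square\mid E\ra F\in X\}$, $B':=\bigwedge X'$ and $Z':=Z^\square$ (elementwise box-translation). Two facts drive the argument: (i) unwinding the definition of $(\cdot)^\square$ and collecting $\square$-conjuncts via $i\K4$ yields $i\K4\vdash B^\square\lr B'\wedge\square B'$; (ii) applying \Cref{Lemma-property for bracket}.1 disjunct-wise gives $i\K4+\square B^\square\vdash([B]Z)^\square\lr[B^\square]Z^\square$, and then \Cref{Lemma-property for bracket}.2 combined with (i) yields $[B^\square]Z^\square\lr[B']Z'$ in $i\K4+\square B^\square$.

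To conclude, fact (i) implies $i\K4\vdash(A\wedge\square B)^\square\ra(B'\ra C^\square)\wedge\square B'$, so A1 gives $(A\wedge\square B)^\square\brt^*(B'\ra C^\square)\wedge\square B'$. Applying the original B2 to the implication set $X'$ produces $(B'\ra C^\square)\wedge\square B'\brt^*[B']Z'$, and A2 chains the two to $(A\wedge\square B)^\square\brt^*[B']Z'$. Using A3 together with the trivial $(A\wedge\square B)^\square\brt^*\square B^\square$ (an A1 step) we reach $(A\wedge\square B)^\square\brt^*\square B^\square\wedge[B']Z'$, and fact (ii) together with A1 gives $\square B^\square\wedge[B']Z'\brt^*([B]Z)^\square$; a final A2 closes the proof. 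The subtlety is that the box-translation destroys the literal shape required by B2, so the argument must route through the primed objects and exploit \Cref{Lemma-property for bracket} to commute the box-translation with the $[\,\cdot\,]$ operator.
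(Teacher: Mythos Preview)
Your proof is correct and follows essentially the same route as the paper: induction on the derivation of $A\brt^* B$, with A1 handled via \Cref{Proposition-propositional properties of Box translation} and B2 handled by passing to the primed data $X',B'$ and invoking \Cref{Lemma-property for bracket} to commute $(\cdot)^\square$ with the $[\,\cdot\,]$ operator. The only notable difference is that the paper declares B3 ``trivial'' whereas you spell out the genuine wrinkle that for atomic $p$ the formula $p^\square=p\wedge\square p$ is neither atomic nor boxed, requiring the two-step application of B3 via the $i\K4$-equivalence $(p\wedge\square p)\ra D\equiv p\ra(\square p\ra D)$; your treatment there is more careful than the paper's.
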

\begin{proof}
It is clear that $A\brt^* B$ iff there exists a Hilbert-type
sequence of relations $\{A_i\brt^* B_i\}_{0\leq i\leq n}$ such that
$A_n=A, B_n=B$ and for each $i\leq n$, $A_i\brt^* B_i$ is an
instance of axioms A1 or B2, or it is derived by making use of
some previous members of sequence and some of the rules A2-A4 or
B1 or B3. Hence we are authorized to use induction on the length
of such sequence for $A\brt^* B$ to show $A^\square\brt^* B^\bo$. The
only non-trivial steps are axioms A1 and B2. Suppose that $A\brt^*
B$ is an instance of A1, i.e. $i\K4\vdash A\ra B$. Then by
Proposition \ref{Proposition-propositional properties of Box
translation}, we have $i\K4\vdash A^\square\ra B^\bo$ and hence
again by A1, $A^\square\brt^* B^\square$, as desired.
\\
For treating B2, suppose that $A:=B\ra C$, $B=\bigwedge X$, $X$
is a set of implications and $Z:=\{E | E\ra F\in X\}\cup\{ C\}$.
We must show  $(A\wedge \bo B)^\square\brt^* ([B]Z)^\square$.
Define $X':=\{E^\bo \ra F^\bo | E\ra F\in X\}, B':=\bigwedge X'$.
Hence by  B2, $B'\ra C^\square\wedge\bo B'\brt^* [B']Z^\square$.
Note that we have $\square B'\equiv \square B^\square$ and also
$i\K4+\square B'\vdash B'\lr B^\square$.

Now by using properties of $\brt^*$ (A1-A3) and Lemma
\ref{Lemma-property for bracket}(2), we can deduce $(B^\square\ra
C^\square)\wedge \bo B^\square\brt^* [B^\square]Z^\square$. Then
Lemma \ref{Lemma-property for bracket}(1) implies  $(B^\square\ra
C^\square)\wedge \bo B^\square\brt^* ([B]Z)^\square$. Then by A1
and A2, we can deduce $((B\ra C)\wedge \bo B)^\square\brt^*
([B]Z)^\square$, as desired.
\end{proof}

The following theorem  is analogous to the Theorem 4.18 in \cite{Sigma.Prov.HA}:
\begin{theorem}\label{Theorem-NNIL approximation is propositionally equivalent2}
For any modal proposition $A$, $A^\square \blrt^* (A^\square)^+$.
\end{theorem}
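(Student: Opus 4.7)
The plan is to prove $A^\square \blrt^* (A^\square)^+$ by induction on the complexity measure $\mathfrak{o}(A^\square)$, paralleling the recursive structure of the $\TNNIL$-algorithm applied to $A^\square$. The direction $(A^\square)^+ \brt^* A^\square$ is immediate from rule A1 together with $\IPC_\square \vdash (A^\square)^+ \to A^\square$, which is the approximation-from-below property of $(\cdot)^+$ obtained by iterating \Cref{Theorem-NNIL Crucial Properties} through the inductive definition of $(\cdot)^+$. So the substantive direction is $A^\square \brt^* (A^\square)^+$.

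The easy cases, namely $A$ atomic, $\bot$, boxed, or of the form $B \wedge C$ or $B \vee C$, are handled by unwinding the definitions of $(\cdot)^\square$ and $(\cdot)^+$, applying the induction hypothesis to each immediate sub-formula, and combining via rules A1-A4, and B1 in the disjunction case. In the boxed case $A = \square B$, one has $(A^\square)^+ = \square (B^\square)^+$, so rule A4 lifts the induction hypothesis $B^\square \brt^* (B^\square)^+$ inside the $\square$.

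The principal case is $A = B \to C$, where the $\TNNIL$-algorithm applied to $A^\square$ dispatches to sub-cases 4a-4c. Sub-cases 4a (an outer conjunction in $C^\square$) and 4b (an outer disjunction in $B^\square$) split $A^\square$ into a conjunction of two smaller implications, which the induction hypothesis finishes via A3. Sub-cases 4c(i)-(iii) are handled by inspection. The critical sub-case is 4c(iv), where $B^\square$ normalizes to a conjunction $\bigwedge X'$ of box-translated implications $E^\square \to F^\square$. I would first invoke \Cref{Lemma-technical property} to obtain the explicit form
\[
(A^\square)^+ \equiv \boxdot \Bigl( \bigwedge_{E \to F \in X} \square ((E \to F)^\square)^+ \to \Bigl( \bigwedge_{D \in X} ((B \downarrow D \to C)^\square)^+ \wedge (([B]Z)^\square)^+ \Bigr) \Bigr).
\]
Each of $(B \downarrow D \to C)^\square$ and $([B]Z)^\square$ has strictly smaller $\mathfrak{o}$-value than $A^\square$, so the induction hypothesis gives $\brt^*$-equivalences with their $(\cdot)^+$-transforms. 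I would then deploy the updated rule B2, with its new conjunct $\square B$, to obtain $(B^\square \to C^\square) \wedge \square B^\square \brt^* [B^\square] Z^\square$, and use \Cref{Lemma-property for bracket}(1) to rewrite $[B^\square] Z^\square$ as $([B]Z)^\square$ modulo $i\K4 + \square B^\square$. Finally, rules A3, A4 and B3 assemble these pieces under the outer $\boxdot$ to yield $A^\square \brt^* (A^\square)^+$.

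The main obstacle is sub-case 4c(iv). The delicate point is bridging the mismatch between B2 — which demands $\square B$ as a separate hypothesis on the left — and $A^\square = (B^\square \to C^\square) \wedge \square(B^\square \to C^\square)$, which supplies $\square(B^\square \to C^\square)$ instead of $\square B^\square$. The strategy is to condition the derivation on the big conjunction $\bigwedge_{E \to F \in X} \square ((E \to F)^\square)^+$ appearing as the antecedent of the inner implication of $(A^\square)^+$; this conjunction is $i\K4$-equivalent to $\square B^\square$ modulo the induction hypothesis applied under $\square$ together with \Cref{Lemma-Preservativity is closed under box translation}, and it can be absorbed into the derivation via iterated applications of B3.
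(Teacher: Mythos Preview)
Your overall inductive scheme and the appeal to \Cref{Lemma-technical property} match the paper, but there are two genuine gaps.

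\textbf{The ``easy'' direction is not immediate.} Your claim that $\IPC_\square \vdash (A^\square)^+ \to A^\square$ follows by ``iterating \Cref{Theorem-NNIL Crucial Properties}'' fails: the $(\cdot)^+$-operator recurses inside boxes, and boxed subformulae of $A^\square$ can sit in \emph{negative} position. Concretely, in the displayed form of $(A^\square)^+$ from \Cref{Lemma-technical property}, the conjuncts $\square((E\to F)^\square)^+$ occur in the antecedent; to pass from them back to $\square(E\to F)^\square$ one needs $(E\to F)^\square \to ((E\to F)^\square)^+$, which is the \emph{hard} direction of the induction hypothesis, not the approximation-from-below property. For a bare counterexample to the general principle: with $D=\neg\square\neg\neg p$ one computes $D^+\equiv\neg\square p$, and $i\K4\nvdash \neg\square p\to\neg\square\neg\neg p$. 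The paper accordingly proves both directions simultaneously. After using the full $\blrt^*$ induction hypothesis to pass from $(A^\square)^+$ to $(\square B\to G)^\square$ with $G=\bigwedge\{B\downarrow D\to C\mid D\in X\}\wedge[B]Z$, it still needs the non-trivial $i\K4$-derivation labelled~$(*)$, namely $\bigwedge\{(B\downarrow D)\to C\mid D\in X\}\wedge[B]E\to A$ for each $E\in Z$, together with closure of $i\K4$ under box-translation, to obtain $(\square B\to G)^\square\to A^\square$ and hence, via A1, the ``easy'' direction.

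\textbf{B2 cannot be applied where you apply it.} Rule B2 requires the antecedent to be $\bigwedge X$ with $X$ a set of \emph{implications}. But $B^\square=\bigwedge_{E\to F\in X}\bigl((E^\square\to F^\square)\wedge\square(E^\square\to F^\square)\bigr)$ has boxed conjuncts, so $(B^\square\to C^\square)\wedge\square B^\square\brt^*[B^\square]Z^\square$ is not an instance of B2. The paper sidesteps this by working one level down: it applies B2 legitimately to $A\wedge\square B\brt^*[B]Z$ (here $B=\bigwedge X$ \emph{is} a conjunction of implications), combines this with $\IPC_\square\vdash A\to\bigwedge\{B\downarrow D\to C\mid D\in X\}$ via A1--A3 to get $A\brt^*(\square B\to G)$, and only then invokes \Cref{Lemma-Preservativity is closed under box translation} to lift the whole relation to $A^\square\brt^*(\square B\to G)^\square$. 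Your final paragraph mentions that lemma, but for a different purpose; it is precisely this lemma that replaces the illicit direct use of B2 at the box-translated level.
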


Before proving this theorem, we state a corollary.

\begin{corollary}\label{Corollary-+algorithm}
$\LCS\vdash A^\square\lr (A^\square)^-$.
\end{corollary}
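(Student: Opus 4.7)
The plan is to decompose $A^\square$ into its outer boxed subformulas and apply Theorem \ref{Theorem-NNIL approximation is propositionally equivalent2} one box at a time. First I would write $A^\square = \hat A(\square D_1,\ldots,\square D_k)$, where $\square D_1,\ldots,\square D_k$ enumerate the outer boxed subformulas of $A^\square$ (those not in the scope of any other box) and $\hat A(p_1,\ldots,p_k)$ is a non-modal $\IPC$-formula obtained by replacing each outer box $\square D_i$ by a fresh atom $p_i$. The definition of $(\cdot)^-$ then yields $(A^\square)^-=\hat A(\square D_1^+,\ldots,\square D_k^+)$, so since $\LCS$ extends $\IPC_\square$, it suffices to establish $\LCS\vdash\square D_i\lr\square D_i^+$ for each $i$.

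Inspecting the clauses of Definition \ref{Definition-Box translation}, each outer box of $A^\square$ falls into exactly one of three shapes, indexed by the outer subformulas of $A$: (i) $\square p$ coming from an atomic outer subformula $p$ of $A$; (ii) $\square B^\square$ coming from an outer subformula $\square B$ of $A$; or (iii) $\square(B^\square\ra C^\square)$ coming from an outer implicative subformula $B\ra C$ of $A$. Case (i) is immediate since $p^+=p$. Case (ii) follows at once from Theorem \ref{Theorem-NNIL approximation is propositionally equivalent2} applied to $B$: we have $B^\square\blrt^*(B^\square)^+$, so the axiom scheme of $\LCS$ gives $\LCS\vdash\square B^\square\lr\square(B^\square)^+$.

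Case (iii) is the key computational step. Writing $W:=B^\square\ra C^\square$, we have $(B\ra C)^\square=W\wedge\square W$. A routine unfolding of the $\TNNIL$-algorithm (in the spirit of Lemma \ref{Lemma-technical property}) gives the identity $(X\wedge\square X)^+=X^+\wedge\square X^+$, hence $((B\ra C)^\square)^+=W^+\wedge\square W^+$. Theorem \ref{Theorem-NNIL approximation is propositionally equivalent2} applied to $B\ra C$ therefore reads $W\wedge\square W\blrt^*W^+\wedge\square W^+$, and the axiom scheme of $\LCS$ gives $\LCS\vdash\square(W\wedge\square W)\lr\square(W^+\wedge\square W^+)$. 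Combined with the $i\K4$ identity $\square(X\wedge\square X)\lr\square X$ (from distribution of $\square$ over $\wedge$ and transitivity $\square X\ra\square\square X$), this reduces to $\LCS\vdash\square W\lr\square W^+$, which is exactly $\square D_i\lr\square D_i^+$ in this case.

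The main obstacle lies in case (iii): one must verify the identity $((B\ra C)^\square)^+=W^+\wedge\square W^+$ and then carry out the $i\K4$-reduction from $\square(W\wedge\square W)$ to $\square W$. The rest is syntactic bookkeeping showing that outer boxes of $A^\square$ are exactly of the three listed shapes, after which assembling the pieces by $\IPC_\square$-substitution in $\hat A$ yields $\LCS\vdash A^\square\lr(A^\square)^-$.
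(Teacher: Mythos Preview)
Your proof is correct and shares the paper's overall strategy: write $A^\square=\hat A(\square C_1,\ldots,\square C_n)$ with $\hat A$ non-modal, note that $(A^\square)^-=\hat A(\square C_1^+,\ldots,\square C_n^+)$, and reduce the problem to establishing $\LCS\vdash\square C_j\lr\square C_j^+$ for each outer box. The difference is in how this last reduction is carried out. The paper does it uniformly: it observes (declared ``not hard'') that $i\K4\vdash\square C_j\lr\square C_j^\square$ for every $j$, then applies \Cref{Lemma-NNIL properties}(3) to obtain $i\K4\vdash\square C_j^+\lr\square(C_j^\square)^+$, and finally invokes \Cref{Theorem-NNIL approximation is propositionally equivalent2} with $C_j$ in the role of $A$ to close the chain $\square C_j\lr\square C_j^\square\lr\square(C_j^\square)^+\lr\square C_j^+$. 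You instead split on the three possible shapes of $C_j$ (atom, $B^\square$, or $B^\square\ra C^\square$) and, in the implicative case, compute $((B\ra C)^\square)^+=W^+\wedge\square W^+$ directly and combine it with the $i\K4$-identity $\square(X\wedge\square X)\lr\square X$. Your route is more explicit and self-contained, avoiding both the unproved observation and the appeal to \Cref{Lemma-NNIL properties}; the paper's route is shorter and uniform across the three cases, but its key observation $i\K4\vdash\square C_j\lr\square C_j^\square$ ultimately unpacks to precisely your case analysis.
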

\begin{proof}
Let $A^\square=B(\square C_1,\square C_2,\ldots,\square C_n)$
where $ B(p_1,\ldots,p_n)$ is a non-modal proposition. It isn't
hard to observe that for each $1\leq j\leq n$, $i\K4\vdash
\square C_j\lr \square C_j^\square$. By definition of
$(A^\square)^-$, we have $(A^\square)^-=B(\square
C_1^+,\ldots,\square C_n^+)$. Now by 
\Cref{Lemma-NNIL properties}, we can deduce that $i\K4\vdash
B(\square C_1^+,\ldots,\square C_n^+)\lr B(\square
(C_1^\square)^+,\ldots,\square (C_n^\square)^+)$. Then 
\Cref{Theorem-NNIL approximation is propositionally equivalent2}
implies that $\LCS\vdash \square (C_i^\square)^+\lr \square
C_i^\square$. Hence $\LCS\vdash (A^\square)^-\lr A^\square$.
\end{proof}

\begin{proof}(\textbf{\Cref{Theorem-NNIL approximation is propositionally equivalent2})}
We prove by induction on $\mathfrak{o}(A^\square)$. Suppose that we
have the desired result for each  proposition $B$ with
$\mathfrak{o}(B^\square)<\mathfrak{o}(A^\square)$. We treat $A$
by the following cases.

\begin{enumerate}
\item (A1) $A$ is atomic. Then $(A^\square)^+=A^\square$, by definition,
and  result holds trivially.
\item (A1-A4, B1) $ A=\square B , A=B\wedge C, A=B\vee C$. All these cases hold
by induction hypothesis. In boxed case, we use of induction
hypothesis and A4. In conjunction, we use of A1-A3 and in
disjunction we use A1,A2 and B1.
\item $ A=B \ra C $. There are several sub-cases. similar to
definition of $\NNIL$-algorithm, an occurrence of a sub-formula $B$ of $A$ is
said to be an {\em outer occurrence} in $A$, if it is neither in
the scope of a $\Box$ nor in the scope of $\ra$.

(c).i.(A1-A3) $C$ contains an outer occurrence of a conjunction.
We can treat this case using induction hypothesis and
$\TNNIL$-algorithm.

(c).ii.(A1-A3) $B$ contains an outer occurrence of a disjunction.
We can treat this case by induction hypothesis and
$\TNNIL$-algorithm.

(c).iii. $B=\bigwedge X$ and $C=\bigvee Y$, where $X, Y$ are sets
of implications, atoms and boxed formulae. We have several
sub-cases.

(c).iii.$\alpha$.(A1-A4, B3)  $X$ contains atomic variables. Let $p$ be an atomic
variable in $X$. Set $D:=\bigwedge(X\setminus\{p\})$. Then
\begin{align*}
(A^\square)^+& \equiv\boxdot[(p\wedge\square p)\ra(D^\square\ra
C^\square)^+]\\
 &\equiv \boxdot[(p\wedge\square p)\ra((D\ra C)^\square)^+]
\end{align*}
On the other hand, we have by induction hypothesis and A1,A2 and B3, that
$$ [(p\wedge\square p)\ra((D\ra C)^\square)^+]\blrt^* (p\wedge\square p)\ra((D\ra C)^\square)$$
which by use of A4 implies:
$$\bo[(p\wedge\square p)\ra((D\ra C)^\square)^+]\blrt^* \bo[(p\wedge\square p)\ra((D\ra C)^\square)]$$
And by use of A1-A3 we have
$$\boxdot[(p\wedge\square p)\ra((D\ra C)^\square)^+]\blrt^* \boxdot[(p\wedge\square p)\ra((D\ra C)^\square)]$$
Finally by A1 and A2 we have : $(A^\square)^+\blrt^* A^\square$.

(c).iii.$\beta$.(A1-A4, B3) $X$ contains boxed formula. Similar
to the previous case.

(c).iii.$\gamma$.(A1, A2) $X$ contains $\top$ or $\bot$. Trivial.

(c).iii.$\delta$.(A1-A4, B2, B3) $X$ contains only implications. This case needs the
axiom B2 and it seems to be the interesting case.

By Lemma \ref{Lemma-technical property},
$$(A^\square)^+\equiv\boxdot\left(\bigwedge_{E\ra F\in
X}\square\left(\left(E\ra
F\right)^\square\right)^+\ra\left(\bigwedge\left\lbrace\left(\left(B\downarrow
D\ra C\right)^\square\right)^+ \mid D\in
X\right\rbrace\wedge\left(\left([B]Z\right)^\square\right)^+\right)
\right).$$
Then by induction hypothesis, A1-A4 and B3 we have:
\begin{align*}
(A^\square)^+&\blrt^*\boxdot\left(\bigwedge_{E\ra F\in
X}\square\left(E\ra F\right)^
\square\ra\left(\bigwedge\left\lbrace\left(B\downarrow D\ra
C\right)^\square \mid D\in X
\right\rbrace\wedge\left([B]Z\right)^\square\right) \right)\\
& \blrt^*\left( \square B\ra \left(\bigwedge\left\lbrace
B\downarrow D\ra C \mid D\in X \right\rbrace\wedge[B]Z\right)
\right)^\square
\end{align*}

We show that for each $E\in Z$,
\begin{center}
(*)\hspace{.5in}  $i\K4\vdash(\bigwedge\{(B\downarrow D)\ra C\mid
D\in X\}\wedge [B]E)\ra A.$
\end{center}
If $E=C$, we are done by $\IPC_\square\vdash[B]C\ra(B\ra C)$. So
suppose some $E\ra F\in X$. We reason in $i\K4$. Assume
$\bigwedge\{(B\downarrow D\ra C\mid D\in X\}$, $[B]E$ and $B$. We
want to derive $C$. We have $(\bigwedge(X\setminus\{E\ra
F\})\wedge F)\ra C$, $[B]E$ and $B$. From $B$ and $[B]E$, we
derive $E$. Also from $B$, we derive $E\ra F$, and so $F$. Hence
we have $\bigwedge(X\setminus\{E\ra F\})\wedge F$, which implies
$C$, as desired.

Now (*) implies
$$i\K4\vdash\overbrace{(\bigwedge\{(B\downarrow D\ra C\mid D\in X\}\wedge
[B]Z)}^G\ra A$$

Then by Proposition \ref{Proposition-propositional properties of
Box translation}, we have $i\K4\vdash (G^\square\wedge B^\bo)\ra
C^\bo$. This implies $i\K4\vdash (B^\bo\ra(G^\square\wedge
B^\bo))\ra(B^\bo\ra C^\bo) $, and hence  $i\K4\vdash (B^\bo\ra
G^\bo)\ra (B^\bo\ra C^\bo)$. Then because $B^\bo\ra \bo B^\bo$,
we have $i\K4\vdash (\bo(B^\bo)\ra G^\bo)\ra (B^\bo\ra C^\bo)$.
Hence by necessitation, we derive $i\K4\vdash(\square B\ra
(\bigwedge\{(B\downarrow D\ra C\mid D\in
X\}\wedge[B]Z))^\square\ra A^\square$. Hence $(A^\square)^+\brt^*
A^\square$.

To show the other way around, i.e., $A^\square\brt^* (A^\square)^+$,
by Proposition \ref{Proposition-propositional properties of Box
translation}, it is enough to show
$$ A\brt^* \left(\square B\ra \left(\bigwedge\left\lbrace
B\downarrow D\ra C \mid D\in X\right\rbrace\wedge[B]Z\right)
\right)$$
or equivalently
$$A\wedge\bo B\brt^*  \left(\bigwedge\left\lbrace
B\downarrow D\ra C \mid D\in X\right\rbrace\wedge[B]Z \right)$$
We have $\IPC_\bo\vdash A\ra \bigwedge\left\lbrace B\downarrow
D\ra C \mid D\in X\right\rbrace$, and hence by A1, $A\wedge\bo
B\brt^* \bigwedge\left\lbrace B\downarrow D\ra C \mid D\in
X\right\rbrace $. On the other hand, $A\wedge\bo B\brt^* [B]Z$,
which by A3, implies
$$A\wedge\bo B\brt^*  \left(\bigwedge\left\lbrace B\downarrow D\ra C \mid D\in X\right\rbrace\wedge[B]Z\right)$$
\end{enumerate}
\end{proof}

\section{The $\Sigma_1$-Provability Logic of $\HA^*$}
In this section we will show that $\LCS$ is the provability logic
of $\HA^*$ for $\Sigma_1$-substitutions.

Before we continue with the soundness and completeness theorem,
let us state the main theorem from \cite{Sigma.Prov.HA}   that plays a crucial 
role in the sequel of this paper. 
\begin{theorem}\label{Theorem-Main tool}
	Let  
	  $A\in \TNNIL^\Box$ be a modal proposition such that $\LC\nvdash A$.
	   Then  there exists some arithmetical $\Sigma_1$-substitution 
	$\sigma$  such that  $ \HA\nvdash \sigma_{_{\sf HA}}(A)$.
\end{theorem}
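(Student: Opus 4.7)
The plan is to combine a Kripke completeness theorem for $\LC$ with a Solovay-style arithmetical realisation adapted to the intuitionistic $\Sigma_1$-setting. By a suitable Kripke completeness result for $\LC$ on finite bimodal models (carrying an intuitionistic preorder $\leq$ and a transitive, converse well-founded relation $\sqsubset$, with bimodal coherence conditions validating $\CP$), from $\LC\nvdash A$ one extracts a finite countermodel $(K,\leq,\sqsubset,\Vdash)$ with a root $r$ where $r\nVdash A$. Standard practice augments $K$ with an extra node $0$ placed below $r$ in both relations, at which $A$ is still refuted; this node plays the role of the ``real world''.

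Next I would run a Solovay-style construction. Via the recursion theorem, define a primitive recursive function $\ell:\mathbb{N}\to K\cup\{0\}$ whose limiting behaviour traces a path through the augmented model. For each $w\in K\cup\{0\}$, let $S_w$ be the $\Sigma_1$-sentence expressing that $\ell$ eventually stabilises in the $\leq$-upset of $w$, and set
\[
\sigma(p):=\bigvee\{S_w\mid w\in K,\ w\Vdash p\},
\]
which is $\Sigma_1$. Arrange $\ell$ so that $\HA$ proves pairwise disjointness and totality of the $S_w$, and the essential box-jump clause $\HA\vdash S_w\to\Box_{\HA}\bigvee_{w\sqsubset v}S_v$ together with an appropriate converse reflection property.

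The heart of the argument is a truth lemma: for every subformula $B$ of $A$ and every $w\in K$,
\[
w\Vdash B\ \ \text{iff}\ \ \HA\vdash S_w\to\sigma_{\sf HA}(B).
\]
Atoms, $\wedge$, $\vee$, and $\Box$ go through by the basic Solovay properties; the subtle case is implication, where one must show inside $\HA$ that from $S_w$ and $\sigma_{\sf HA}(B)$ one can derive $\bigvee\{S_v\mid v\geq w,\ v\Vdash B\}$, which amounts to an internal case distinction over the $\leq$-successors forcing $B$. The hypothesis $A\in\TNNIL^\Box$ is exactly what makes this feasible: every boxed subformula of $A$ lies in $\TNNIL$, where Visser-style $\NNIL$-preservativity supplies the needed $\HA$-provable disjunctions. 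Applying the truth lemma at $0$ then yields the theorem: if $\HA\vdash\sigma_{\sf HA}(A)$, then $\HA\vdash S_0\to\sigma_{\sf HA}(A)$, forcing $0\Vdash A$ and therefore (by persistence upward) $r\Vdash A$, contradicting the choice of $r$. The main obstacle throughout is precisely this implication clause of the truth lemma: unlike the classical case, where one case-splits in the metatheory over which $S_w$ holds, intuitionistically one must carry out the split provably inside $\HA$, and the $\TNNIL^\Box$-restriction is tailored exactly to make this internalisation go through.
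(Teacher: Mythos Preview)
The paper does not itself prove this theorem; it simply defers to Theorems~4.26 and~5.1 of \cite{Sigma.Prov.HA}, where the former supplies finite Kripke completeness for $\LC$ and the latter carries out the Solovay-style arithmetical realisation. Your sketch --- a finite $\LC$-countermodel augmented with a bottom node, a recursion-theoretic Solovay function $\ell$, the $\Sigma_1$-sentences $S_w$, and a truth lemma whose delicate implication clause is resolved precisely by the $\TNNIL$-shape of what lies under boxes --- is exactly the strategy of the cited work, so at the level of approach your proposal is correct and coincides with the intended proof.

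One caution on the formulation of your truth lemma: you state it for \emph{every} subformula $B$ of $A$, but recall that $A\in\TNNIL^\Box$ (the class called $\TNNIL^-$ in the definitions) allows an arbitrary non-modal outer part, so subformulae of $A$ may contain nested implications on the left. The bi-implication $w\Vdash B$ iff $\HA\vdash S_w\to\sigma_{\sf HA}(B)$ does not go through uniformly for such $B$; in the cited argument the full truth lemma is established for the $\TNNIL$ formulae sitting under boxes (which is where the $\NNIL$-preservativity machinery bites), while the outer non-modal layer is handled by a separate step once the boxed atoms $\Box B_i$ have been pinned down as $\Sigma_1$-disjunctions of the $S_w$. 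Your identification of the obstacle is right, but the scope of the truth lemma needs to be narrowed accordingly.
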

\begin{proof}
For the  rather long  proof of this fact, see 
\cite{Sigma.Prov.HA}, Theorems 4.26 and  5.1.
\end{proof}

\subsection{The Soundness Theorem}
Let us define some notions from \cite{Visser02}. We call a
first-order sentence $A$, $\Sigma$-preserves $B$
($A\rhd_{T,\Sigma_1}B$), if for each $\Sigma_1$-sentence $C$, if
$T\vdash C\ra A$, then $T\vdash C\ra B$. For modal propositions
$A$ and $B$, we define $A\rhd_{T,\Sigma_1,\Sigma_1}B$  iff for
each arithmetical $\Sigma$-substitution $\sigma_T$, we have
$\sigma_T(A)\rhd_{T,\Sigma_1}\sigma_T(B)$. For arbitrary modal
sentences $A, B$, the notation $A\ar_{T,\Sigma_1}B$ means that
$T\vdash\sigma_T(A)$ implies $T\vdash\sigma_T(B)$, for arbitrary
$\Sigma_1$-substitution $\sigma_T$. All the above relations with a
superscript of $HA$, means  ``an arithmetical formalization of
that relation in \HA", for example, $A\rhd^{\sf HA}_{{\sf
HA}^*,\Sigma_1}B$ means $\HA\vdash \text{``}A\rhd_{{\sf
HA}^*,\Sigma_1}B\text{''} $.

\begin{lemma}\label{Lemma-Sigma preservativity}
\begin{enumerate}
\item For each first-order sentences $A,B$, $A\rhd^{{\sf
HA}}_{{\sf HA}^*,\Sigma_1}B$ iff $A^\square\rhd^{{\sf HA}}_{{\sf
HA},\Sigma_1}B^\square$,
\item For each propositional modal $A,B$, $A\rhd^{{\sf
HA}}_{{\sf HA}^*,\Sigma_1,\Sigma_1}B$ iff $A^\square\rhd^{{\sf
HA}}_{{\sf HA},\Sigma_1,\Sigma_1}B^\square$.
\end{enumerate}
\end{lemma}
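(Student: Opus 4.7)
The plan is to route both items through the characterization of $\HA^*$ as $\{D\mid \HA\vdash D^\square\}$ together with \Cref{Lemma-Properties of Box translation}.1, which supplies $\HA\vdash C\lr C^\square$ for every $\Sigma_1$-sentence $C$. Since both of these ingredients, as well as the commutation in \Cref{Lemma-Properties of Box translation 2}, are themselves provable inside $\HA$, every step of the unfolding lifts from the external to the $\HA$-formalized version without extra work.

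For part 1, I would unfold the definition: $A\rhd_{\HA^*,\Sigma_1}B$ asserts that for every $\Sigma_1$-sentence $C$, $\HA^*\vdash C\ra A$ implies $\HA^*\vdash C\ra B$. By the definition of $\HA^*$, $\HA^*\vdash C\ra A$ is equivalent to $\HA\vdash (C\ra A)^\square$, and by \Cref{Definition-Box translation} this is in turn equivalent to $\HA\vdash C^\square\ra A^\square$ (the conjunct $\square(C^\square\ra A^\square)$ being derivable by formalized necessitation). Since $C$ is $\Sigma_1$, \Cref{Lemma-Properties of Box translation}.1 allows us to replace $C^\square$ by $C$, giving $\HA\vdash C\ra A^\square$. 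The identical chain applied with $B$ in place of $A$ shows that the defining clauses of $A\rhd_{\HA^*,\Sigma_1}B$ and $A^\square\rhd_{\HA,\Sigma_1}B^\square$ coincide; each link is a provable equivalence inside $\HA$, so the $\HA$-formalized versions agree too.

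For part 2, I would freeze the substitution and reduce to part 1. Unfolding, $A\rhd^{\sf HA}_{\HA^*,\Sigma_1,\Sigma_1}B$ says that for every $\Sigma_1$-substitution $\sigma$, $\sigma_{{\sf HA}^*}(A)\rhd^{\sf HA}_{\HA^*,\Sigma_1}\sigma_{{\sf HA}^*}(B)$. Part 1 converts this to $(\sigma_{{\sf HA}^*}(A))^\square\rhd^{\sf HA}_{\HA,\Sigma_1}(\sigma_{{\sf HA}^*}(B))^\square$. Now \Cref{Lemma-Properties of Box translation 2} provides $\HA\vdash\sigma_{\sf HA}(A^\square)\lr(\sigma_{{\sf HA}^*}(A))^\square$, and likewise for $B$; substituting $\HA$-provably equivalent formulas inside the $\rhd^{\sf HA}_{\HA,\Sigma_1}$ relation (both as hypothesis and as conclusion, since the witness class $\Sigma_1$ is unaffected) yields $\sigma_{\sf HA}(A^\square)\rhd^{\sf HA}_{\HA,\Sigma_1}\sigma_{\sf HA}(B^\square)$. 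Since $\sigma$ was arbitrary, this is exactly $A^\square\rhd^{\sf HA}_{\HA,\Sigma_1,\Sigma_1}B^\square$.

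The only delicacy is bookkeeping under the $\HA$-formalization: one must check that each equivalence used ($\HA^*\vdash D$ vs.\ $\HA\vdash D^\square$, the formalized necessitation that folds away the outer $\square$ of a box-translated implication, the self-completeness $C\lr C^\square$ on $\Sigma_1$, and the substitution-commutation of \Cref{Lemma-Properties of Box translation 2}) is a theorem of $\HA$, so that its arithmetization is available as a premise under the outer $\HA\vdash$. There is no combinatorial obstruction; the proof is essentially a four-step dictionary translation on both sides.
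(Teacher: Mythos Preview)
Your argument is correct and matches the paper's own proof essentially step for step: part~1 via the definition of $\HA^*$ together with \Cref{Lemma-Properties of Box translation}.1, and part~2 by unfolding the substitution, applying part~1, and then invoking the commutation of \Cref{Lemma-Properties of Box translation 2}. Your write-up is simply a more detailed spelling-out of the same chain of equivalences the paper records.
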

\begin{proof}
To prove part 1, use Lemma 
\ref{Lemma-Properties of Box translation}.1 and definitions of $\rhd^{{\sf HA}}_{{\sf
HA}^*,\Sigma_1}$ and $\rhd^{{\sf HA}}_{{\sf HA},\Sigma_1}$.

To prove part 2, note that $A\rhd^{{\sf HA}}_{{\sf
HA}^*,\Sigma_1,\Sigma_1}B$ iff for all $\Sigma$-substitution
$\sigma$, $\sigma_{{\sf HA}^*}(A)\rhd^{{\sf HA}}_{{\sf
HA}^*,\Sigma_1}\sigma_{{\sf HA}^*}(B)$ iff  for all
$\Sigma$-substitution $\sigma$, $\sigma_{{\sf
HA}^*}(A)^\square\rhd^{{\sf HA}}_{{\sf HA},\Sigma_1}\sigma_{{\sf
HA}^*}(B)^\square$ (by previous part) iff for all
$\Sigma$-substitution $\sigma$, $\sigma_{{\sf
HA}}(A^\square)\rhd^{{\sf HA}}_{{\sf HA},\Sigma_1}\sigma_{{\sf
HA}}(B^\square)$ iff $A^\square\rhd^{{\sf HA}}_{{\sf
HA},\Sigma_1,\Sigma_1}B^\square$.
\end{proof}

\begin{lemma}\label{Lemma-B_1}
$\rhd^{{\sf HA}}_{{\sf HA},\Sigma_1}$ is closed under $B1$.
\end{lemma}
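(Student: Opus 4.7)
The plan is to reason inside $\HA$: assume $A\rhd_{\HA,\Sigma_1}C$ and $B\rhd_{\HA,\Sigma_1}C$, pick an arbitrary $\Sigma_1$-sentence $D$ with $\HA\vdash D\to A\vee B$, and derive $\HA\vdash D\to C$. The whole argument hinges on one standard tool, the \emph{formalised $\Sigma_1$-splitting property of $\HA$}: if $D\in\Sigma_1$ and $\HA\vdash D\to E_1\vee E_2$, then one can construct $\Sigma_1$-sentences $D_1,D_2$ with $\HA\vdash D\to D_1\vee D_2$, $\HA\vdash D_1\to E_1$ and $\HA\vdash D_2\to E_2$. The construction is primitive recursive in a given proof of $D\to E_1\vee E_2$ (writing $D=\exists x\,\delta(x)$ with $\delta\in\Delta_0$ and applying Kleene realisability or the Friedman--Visser slash to extract an index $e$ whose values $\{e\}(x)$ on witnesses $x$ of $\delta$ are of shape $\langle 0,r\rangle$ or $\langle 1,r\rangle$), so the splitting statement is itself a theorem of $\HA$.

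Granting this principle, I would apply it to the hypothesis $\HA\vdash D\to A\vee B$, producing $D_1,D_2\in\Sigma_1$ with $\HA\vdash D\to D_1\vee D_2$, $\HA\vdash D_1\to A$ and $\HA\vdash D_2\to B$. The two hypotheses $A\rhd_{\HA,\Sigma_1}C$ and $B\rhd_{\HA,\Sigma_1}C$, instantiated at the $\Sigma_1$-witnesses $D_1$ and $D_2$ respectively, then yield $\HA\vdash D_1\to C$ and $\HA\vdash D_2\to C$. A piece of propositional reasoning in $\IPC$ combines these to $\HA\vdash D_1\vee D_2\to C$, and together with $\HA\vdash D\to D_1\vee D_2$ we conclude $\HA\vdash D\to C$. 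Since every step is formalisable in $\HA$, this delivers $(A\vee B)\rhd^{\HA}_{\HA,\Sigma_1}C$, as desired.

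The main (indeed only) delicacy is verifying that the $\Sigma_1$-splitting is provable \emph{inside} $\HA$, rather than merely metatheoretically true; this is the classical content of the formalisation of Kleene realisability for $\HA$. In the write-up I would either cite a standard reference such as Troelstra--van Dalen or, more in keeping with the style of the present paper, reduce to the directly analogous lemma already proved in the companion paper \cite{Sigma.Prov.HA}, whose argument transfers verbatim to the case at hand.
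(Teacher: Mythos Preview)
Your proposal is correct and is precisely the argument behind the paper's citation: the paper gives no proof of its own but refers to \cite{Visser02}, \S 9.1, where exactly this $\Sigma_1$-splitting (obtained from formalised $q$-realisability/Kleene slash) is used to derive closure of $\rhd^{\sf HA}_{{\sf HA},\Sigma_1}$ under B1. Your sketch, including the remark that the construction is primitive recursive in the given proof and hence formalisable in $\HA$, matches that argument.
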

\begin{proof}
See \cite{Visser02}, 9.1.
\end{proof}

\begin{corollary}\label{Corollary-B_1}
$\rhd^{{\sf HA}}_{{\sf HA}^*,\Sigma_1}$ is closed under $B1$.
\end{corollary}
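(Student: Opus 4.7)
The plan is to reduce closure under $B1$ for $\rhd^{{\sf HA}}_{{\sf HA}^*,\Sigma_1}$ to the already established closure under $B1$ for $\rhd^{{\sf HA}}_{{\sf HA},\Sigma_1}$ (\Cref{Lemma-B_1}), using the box-translation bridge supplied by \Cref{Lemma-Sigma preservativity}.

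First I would assume the hypotheses: first-order sentences $A, B, C$ with
$A\rhd^{{\sf HA}}_{{\sf HA}^*,\Sigma_1}C$ and $B\rhd^{{\sf HA}}_{{\sf HA}^*,\Sigma_1}C$.
Applying \Cref{Lemma-Sigma preservativity}.1 to each hypothesis translates them into statements about the box-translations: $A^\square\rhd^{{\sf HA}}_{{\sf HA},\Sigma_1}C^\square$ and $B^\square\rhd^{{\sf HA}}_{{\sf HA},\Sigma_1}C^\square$.
At this point we are inside $\HA$'s preservativity relation, where closure under $B1$ is known (\Cref{Lemma-B_1}), so we conclude
$A^\square\vee B^\square\rhd^{{\sf HA}}_{{\sf HA},\Sigma_1}C^\square$.

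Next I would invoke \Cref{Definition-Box translation}, which gives $(A\vee B)^\square = A^\square\vee B^\square$ on the nose, so the previous line reads
$(A\vee B)^\square\rhd^{{\sf HA}}_{{\sf HA},\Sigma_1}C^\square$.
A second application of \Cref{Lemma-Sigma preservativity}.1, now in the opposite direction, yields
$A\vee B\rhd^{{\sf HA}}_{{\sf HA}^*,\Sigma_1}C$, which is exactly closure under $B1$ for $\rhd^{{\sf HA}}_{{\sf HA}^*,\Sigma_1}$.

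There is no real obstacle to anticipate here: the corollary is essentially a transport of an already-known closure property along the equivalence provided by \Cref{Lemma-Sigma preservativity}, and the only fact one needs beyond that equivalence is the trivial commutation $(A\vee B)^\square = A^\square\vee B^\square$ built into the definition of box-translation. The only point worth double-checking while writing the proof is that \Cref{Lemma-Sigma preservativity}.1 is phrased as a biconditional, so both directions are legitimately available.
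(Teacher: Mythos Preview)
Your proof is correct and is precisely the argument the paper intends: the paper's own proof reads ``Immediate corollary of Lemma \ref{Lemma-Sigma preservativity} and \ref{Lemma-B_1},'' and you have simply unpacked that one-line justification, using the biconditional of \Cref{Lemma-Sigma preservativity}.1 to pass to $\rhd^{{\sf HA}}_{{\sf HA},\Sigma_1}$, applying \Cref{Lemma-B_1} there, and coming back via the definitional identity $(A\vee B)^\square=A^\square\vee B^\square$.
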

\begin{proof}
Immediate corollary of Lemma \ref{Lemma-Sigma preservativity} and \ref{Lemma-B_1}.
\end{proof}

\begin{lemma}\label{Lemma-B_2}
$\rhd^{{\sf HA}}_{{\sf HA},\Sigma_1,\Sigma_1}$ satisfies
A$1$-A$4$, B$1$,  $\text{B}2'$ and B$3$.
\end{lemma}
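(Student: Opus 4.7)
The plan is to verify each rule for $\rhd^{{\sf HA}}_{{\sf HA},\Sigma_1,\Sigma_1}$ by an internalization inside $\HA$ of standard facts about $\Sigma$-preservativity over $\HA$. The rules A1--A4, B1, and B3 are essentially bookkeeping; B2$'$ is the one substantive clause and is inherited from Visser's principal $\Sigma$-preservativity theorem in \cite{Visser02} (used in its modal form in \cite{Sigma.Prov.HA}).

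For A1, observe that $i\K4$ is arithmetically sound over $\HA$ uniformly in the substitution, so $\HA$ proves $\Box_{{\sf HA}}(\sigma_{{\sf HA}}(A)\ra\sigma_{{\sf HA}}(B))$ for every $\Sigma_1$-substitution $\sigma$; preservativity of any $\Sigma_1$-antecedent then follows. A2 and A3 drop straight out of the set-theoretic shape of $\rhd$. A4 I would obtain by instantiating the antecedent of $\sigma_{{\sf HA}}(A)\rhd^{{\sf HA}}_{{\sf HA},\Sigma_1}\sigma_{{\sf HA}}(B)$ at $\top$, producing $\HA\vdash\Box_{{\sf HA}}\sigma_{{\sf HA}}(A)\ra\Box_{{\sf HA}}\sigma_{{\sf HA}}(B)$; since $\Box_{{\sf HA}}\sigma_{{\sf HA}}(A)$ is itself $\Sigma_1$, this suffices. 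B1 is the uniform form of \Cref{Lemma-B_1} and transfers directly from \cite{Visser02}, 9.1 since Visser's argument is uniform in the $\Sigma_1$-antecedent. For B3, I would case-split on $\sigma_{{\sf HA}}(p)$: when $p$ is atomic or boxed, $\sigma_{{\sf HA}}(p)$ is $\Sigma_1$, so $S\wedge\sigma_{{\sf HA}}(p)$ remains a $\Sigma_1$-antecedent and the claim reduces to the hypothesis $A\rhd^{{\sf HA}}_{{\sf HA},\Sigma_1,\Sigma_1}B$.

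For B2$'$, take $X$ a set of implications, $B=\bigwedge X$, $A=B\ra C$, and $Z=\{E\mid E\ra F\in X\}\cup\{C\}$; fix a $\Sigma_1$-substitution $\sigma$ and a $\Sigma_1$-sentence $S$ with $\HA\vdash S\ra(\sigma_{{\sf HA}}(B)\ra\sigma_{{\sf HA}}(C))$. The target is $\HA\vdash S\ra\sigma_{{\sf HA}}([B]Z)$, i.e.\ $\HA\vdash S\ra\bigvee_{E\in Z}(\sigma_{{\sf HA}}(B)\ra\sigma_{{\sf HA}}(E))$. This is precisely the modal-uniform form of Visser's $\Sigma$-preservativity theorem for $\HA$, proved in \cite{Visser02} by combining the NNIL-approximation algorithm with the Leivant-style admissible $\Sigma_1$-disjunction rule of $\HA$: from a derivation of $S\ra(\bigwedge X\ra C)$ one extracts a derivation of $S\ra(\bigwedge X\ra E)$ for some $E\in Z$. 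Uniformity in $\sigma$ is automatic because Visser's argument is constructive and the resulting metastatement is $\Pi_2$, so formalizes inside $\HA$.

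The only item requiring real work is B2$'$, which ultimately rests on Visser's admissible-rules analysis of $\HA$ in \cite{Visser02} and is simply imported here; the rest of the rules are mechanical adaptations of the corresponding clauses in \cite{Sigma.Prov.HA}, and the modal layer adds only uniformity in $\sigma$.
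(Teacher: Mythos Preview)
Your proposal is correct and matches the paper's own proof, which likewise declares A1--A4 and B3 straightforward, obtains B1 from \Cref{Lemma-B_1}, and defers B2$'$ to \cite{Visser02}, 9.2. One minor inaccuracy worth noting: $[B]E$ is not $B\ra E$ in general (by \Cref{definition-braket} it leaves atomic and boxed formulae unchanged and distributes over $\wedge,\vee$), and Visser's result yields the full disjunction $[B]Z$ rather than a single disjunct ``for some $E\in Z$''; but since both you and the paper ultimately rest this case on the citation to \cite{Visser02}, the argument stands.
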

\begin{proof}
Proof of closure under A1-A4 and B3 is straightforward. Closure
under B1 is by Lemma \ref{Lemma-B_1}. For a proof of case
$\text{B}2'$, see \cite{Visser02}.9.2.
\end{proof}

\begin{corollary}\label{Corollary-B_2}
$\rhd^{{\sf HA}}_{{\sf HA}^*,\Sigma_1,\Sigma_1}$ satisfies $B2$.
\end{corollary}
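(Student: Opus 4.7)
The plan is to reduce the claim to the $\HA$-setting via the box-translation bridge, and then conclude from the closure of $\brt^*$ under box-translation already established in Lemma \ref{Lemma-Preservativity is closed under box translation}. By Lemma \ref{Lemma-Sigma preservativity}.2, proving $A\wedge \square B \rhd^{{\sf HA}}_{{\sf HA}^*,\Sigma_1,\Sigma_1} [B]Z$ is equivalent to proving $(A\wedge \square B)^\square \rhd^{{\sf HA}}_{{\sf HA},\Sigma_1,\Sigma_1} ([B]Z)^\square$, so the entire remaining argument takes place inside $\HA$.

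The key observation is that although Lemma \ref{Lemma-B_2} only lists $\text{B}2'$, the relation $\rhd^{{\sf HA}}_{{\sf HA},\Sigma_1,\Sigma_1}$ in fact satisfies the full $B2$ as well. Indeed, $\text{B}2'$ gives $A\rhd [B]Z$; from $i\K4 \vdash A\wedge \square B \ra A$ and $A1$ we have $A\wedge \square B \rhd A$; transitivity $A2$ then delivers $A\wedge \square B \rhd [B]Z$. Hence $\rhd^{{\sf HA}}_{{\sf HA},\Sigma_1,\Sigma_1}$ verifies every defining clause of $\brt^*$, namely $A1$--$A4$, $B1$, $B2$, and $B3$, and a routine induction on the generation of $\brt^*$ yields the inclusion $\brt^* \,\subseteq\, \rhd^{{\sf HA}}_{{\sf HA},\Sigma_1,\Sigma_1}$.

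The remainder is formal: $B2$ of $\brt^*$ gives $A\wedge \square B \brt^* [B]Z$; Lemma \ref{Lemma-Preservativity is closed under box translation} lifts this to $(A\wedge \square B)^\square \brt^* ([B]Z)^\square$; and the inclusion above transfers it to the desired $(A\wedge \square B)^\square \rhd^{{\sf HA}}_{{\sf HA},\Sigma_1,\Sigma_1} ([B]Z)^\square$. The only delicate point is the $B2$-from-$\text{B}2'$ observation; everything else is bookkeeping. A more pedestrian alternative would be to unfold the $B2$-case of the proof of Lemma \ref{Lemma-Preservativity is closed under box translation} directly for $\rhd^{{\sf HA}}_{{\sf HA},\Sigma_1,\Sigma_1}$, applying $\text{B}2'$ to $A' := B'\ra C^\square$ with $B' := \bigwedge\{E^\square\ra F^\square \mid E\ra F \in X\}$, and then using $A1$, $A2$, $A3$ together with Lemma \ref{Lemma-property for bracket} to pass from $[B']Z'$ to $([B]Z)^\square$ under the hypothesis $\square B'$ (which is available since $(A\wedge \square B)^\square \vdash_{i\K4} \square B'$); this route also works but is noticeably longer.
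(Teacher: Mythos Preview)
Your proof is correct and takes a genuinely different route from the paper's. The paper follows exactly what you call the ``pedestrian alternative'': it applies $\text{B}2'$ to $B'\ra C^\square$ with $B':=\bigwedge\{E^\square\ra F^\square\mid E\ra F\in X\}$, then uses \autoref{Lemma-property for bracket} to identify $[B']Z'$ with $([B]Z)^\square$ modulo $\square B'$, and finishes with A1--A3. In effect, the paper redoes for $\rhd^{{\sf HA}}_{{\sf HA},\Sigma_1,\Sigma_1}$ the same computation that was already carried out for $\brt^*$ in the B2-case of \autoref{Lemma-Preservativity is closed under box translation}.

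Your primary argument avoids this repetition by first noting the easy implication $\text{B}2'\Rightarrow B2$ (via A1 and A2), thereby obtaining the inclusion $\brt^*\subseteq\rhd^{{\sf HA}}_{{\sf HA},\Sigma_1,\Sigma_1}$ directly from \autoref{Lemma-B_2}; then \autoref{Lemma-Preservativity is closed under box translation} does all the work. This is cleaner and yields as a byproduct an inclusion the paper never states explicitly (it only proves the $\HA^*$-version in \autoref{Lemma-Admissible rules of HA^*}, using the present corollary as an ingredient). The trade-off is that your route depends on the B2-case of \autoref{Lemma-Preservativity is closed under box translation}, whose proof is essentially the same calculation the paper performs here; so the total computational content is the same, but you factor it more economically.
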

\begin{proof}
Let $A,B,C,X,Z$ be as stated in defining B2. We must prove
$A\wedge\square B\rhd^{{\sf HA}}_{{\sf
HA}^*,\Sigma_1,\Sigma_1}[B]Z$. Hence by Lemma \ref{Lemma-Sigma
preservativity}, it is enough to show $(A\wedge\square
B)^\square\rhd^{{\sf HA}}_{{\sf
HA},\Sigma_1,\Sigma_1}([B]Z)^\square$. Let $X':=\{E^\square\ra
F^\square | E\ra F\in X\}, B':=\bigwedge X', C':=C^\square,
Z':=\{E^\square | E\ra F\in X\}\cup\{C'\}$.  Now Because
$\rhd^{{\sf HA}}_{{\sf HA},\Sigma_1,\Sigma_1}$ satisfies B2$'$
(Lemma \ref{Lemma-B_2}), we have $(B'\ra C')\rhd^{{\sf HA}}_{{\sf
HA},\Sigma_1,\Sigma_1}[B']Z'$. Note that $Z^\square=Z'$ and
$\IPC_\square\vdash (B'\wedge\square B')\lr B^\square$. Hence by
Lemma \ref{Lemma-property for bracket}.2, $i\K4+\square
B'\vdash[B']Z'\lr[B^\square]Z^\square$. Also by Lemma
\ref{Lemma-property for bracket}.1, $i\K4+\square B'\vdash
[B^\square]Z^\square\lr([B]Z)^\square$. So $i\K4+\square B'\vdash
[B']Z'\lr ([B]Z)^\square$. Now because $\rhd^{{\sf HA}}_{{\sf
HA},\Sigma_1,\Sigma_1}$ satisfies A$1$, we have $\square
B'\rhd^{{\sf HA}}_{{\sf HA},\Sigma_1,\Sigma_1} [B']Z'\lr
([B]Z)^\square$. Now one can easily observe that because
$\rhd^{{\sf HA}}_{{\sf HA},\Sigma_1,\Sigma_1}$ is closed under
A$1$-A$3$, we can deduce $(B'\ra C')\wedge\square B'\rhd^{{\sf
HA}}_{{\sf HA},\Sigma_1,\Sigma_1} ([B]Z)^\square$. This by using
A$1$-A$3$ implies $((B\ra C)\wedge\square B)^\square\rhd^{{\sf
HA}}_{{\sf HA},\Sigma_1,\Sigma_1} ([B]Z)^\square$. Hence by Lemma
\ref{Lemma-Sigma preservativity}.2, $(B\ra C)\wedge\square
B\rhd^{{\sf HA}}_{{\sf HA},\Sigma_1,\Sigma_1} [B]Z$, as desired.
\end{proof}

\begin{corollary}\label{Corollary-B_3}
$\rhd^{{\sf HA}}_{{\sf HA}^*,\Sigma_1,\Sigma_1} $ is closed under $B3$.
\end{corollary}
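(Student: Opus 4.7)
The strategy mirrors the argument of \Cref{Corollary-B_2}: reduce the assertion to closure of $\rhd^{{\sf HA}}_{{\sf HA},\Sigma_1,\Sigma_1}$ under the appropriate B-axiom via \Cref{Lemma-Sigma preservativity}. Assume $A\rhd^{{\sf HA}}_{{\sf HA}^*,\Sigma_1,\Sigma_1}B$ and let $p$ be atomic or boxed. We aim to show $p\ra A\rhd^{{\sf HA}}_{{\sf HA}^*,\Sigma_1,\Sigma_1}p\ra B$. By \Cref{Lemma-Sigma preservativity}.2 the hypothesis gives $A^\square\rhd^{{\sf HA}}_{{\sf HA},\Sigma_1,\Sigma_1}B^\square$, and it suffices to establish
\[
(p\ra A)^\square\ \rhd^{{\sf HA}}_{{\sf HA},\Sigma_1,\Sigma_1}\ (p\ra B)^\square.
\]
Unfolding the box translation,
\[
(p\ra A)^\square=(p^\square\ra A^\square)\wedge\square(p^\square\ra A^\square),
\]
and similarly for $B$. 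Hence by A3 and transitivity A2 it is enough to produce
\[
(p^\square\ra A^\square)\ \rhd^{{\sf HA}}_{{\sf HA},\Sigma_1,\Sigma_1}\ (p^\square\ra B^\square),\tag{$\ast$}
\]
since then A4 yields the corresponding relation for the boxed conjunct and A3 combines both.

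The only subtle point is $(\ast)$. If $p$ is boxed, then $p^\square$ is itself a boxed formula, and \Cref{Lemma-B_2} (closure of $\rhd^{{\sf HA}}_{{\sf HA},\Sigma_1,\Sigma_1}$ under B3) applies directly. If $p$ is atomic, then $p^\square=p\wedge\square p$, which is neither atomic nor boxed, so B3 cannot be invoked in one step. The workaround is to split the antecedent: in $i\K4$ we have $(p\wedge\square p)\ra X\equiv p\ra(\square p\ra X)$ for any $X$. Starting from $A^\square\rhd^{{\sf HA}}_{{\sf HA},\Sigma_1,\Sigma_1}B^\square$, first apply B3 with the boxed formula $\square p$ to get
\[
\square p\ra A^\square\ \rhd^{{\sf HA}}_{{\sf HA},\Sigma_1,\Sigma_1}\ \square p\ra B^\square,
\]
and then apply B3 again with the atomic $p$ to obtain
\[
p\ra(\square p\ra A^\square)\ \rhd^{{\sf HA}}_{{\sf HA},\Sigma_1,\Sigma_1}\ p\ra(\square p\ra B^\square).
\]
Finally use A1 (for the $i\K4$-equivalence above) and A2 to convert this into $(\ast)$.

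Putting the pieces together: $(\ast)$ combined with A4 gives the analogous preservativity for the boxed conjunct $\square(p^\square\ra A^\square)\rhd\square(p^\square\ra B^\square)$; A1 trivially gives $(p\ra A)^\square\rhd$ each conjunct of its own, so A2 composes these to $(p\ra A)^\square\rhd (p^\square\ra B^\square)$ and $(p\ra A)^\square\rhd \square(p^\square\ra B^\square)$; then A3 conjoins them to $(p\ra A)^\square\rhd^{{\sf HA}}_{{\sf HA},\Sigma_1,\Sigma_1}(p\ra B)^\square$. Pulling back through \Cref{Lemma-Sigma preservativity}.2 yields $p\ra A\rhd^{{\sf HA}}_{{\sf HA}^*,\Sigma_1,\Sigma_1}p\ra B$, as required. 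The only non-routine step is the atomic case of $(\ast)$, where one must apply B3 twice to cope with the fact that box translation turns an atomic $p$ into the non-atomic, non-boxed conjunction $p\wedge\square p$.
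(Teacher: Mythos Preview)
Your proof is correct and follows the same strategy as the paper: transfer to $\rhd^{{\sf HA}}_{{\sf HA},\Sigma_1,\Sigma_1}$ via \Cref{Lemma-Sigma preservativity}.2, obtain the preservativity $p^\square\ra A^\square\rhd p^\square\ra B^\square$, then use A4, A1--A3 to conjoin the unboxed and boxed conjuncts into $(p\ra A)^\square\rhd(p\ra B)^\square$, and transfer back. The one place where you are more careful than the paper is the atomic case of $(\ast)$: the paper simply invokes B3 for $\rhd^{{\sf HA}}_{{\sf HA},\Sigma_1,\Sigma_1}$ with antecedent $p^\square$, tacitly treating $p^\square=p\wedge\Box p$ as admissible even though B3 as stated requires the antecedent to be atomic or boxed. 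Your two-step application of B3 (first with $\Box p$, then with $p$), followed by the $i\K4$-currying equivalence and A1/A2, is the clean way to make that step rigorous; it does not change the overall architecture of the argument.
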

\begin{proof}
Let $p$ be atomic or boxed and assume some $ A, B$ such that
$A\rhd^{{\sf HA}}_{{\sf HA}^*,\Sigma_1,\Sigma_1}B$. Then by Lemma
\ref{Lemma-Sigma preservativity}.2, $A^\square\rhd^{{\sf
HA}}_{{\sf HA},\Sigma_1,\Sigma_1}B^\square$. Because $\rhd^{{\sf
HA}}_{{\sf HA},\Sigma_1,\Sigma_1}$ satisfies B3, we get
$p^\square\ra A^\square\rhd^{{\sf HA}}_{{\sf
HA},\Sigma_1,\Sigma_1}p^\square\ra B^\square$. Now by A4,
$\square[p^\square\ra A^\square]\rhd^{{\sf HA}}_{{\sf
HA},\Sigma_1,\Sigma_1}\square[p^\square\ra B^\square]$, which
implies $(p\ra A)^\square\rhd^{{\sf HA}}_{{\sf
HA},\Sigma_1,\Sigma_1}(p\ra B)^\square$. Now by Lemma
\ref{Lemma-Sigma preservativity}.2, $p\ra A\rhd^{{\sf HA}}_{{\sf
HA}^*,\Sigma_1,\Sigma_1}p\ra B$, as desired.
\end{proof}

\begin{lemma}\label{Lemma-Admissible rules of HA^*}
We have the following inclusions:
$$ \brt^*\ \subseteq\, \rhd^{{\sf HA}}_{{\sf HA}^*,\Sigma_1,\Sigma_1} \subseteq\ \ar^{{\sf HA}}_{{\sf HA}^*,\Sigma_1}$$
\end{lemma}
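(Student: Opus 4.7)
The plan is to prove the two inclusions separately. For the first inclusion $\brt^*\subseteq\rhd^{{\sf HA}}_{{\sf HA}^*,\Sigma_1,\Sigma_1}$, since $\brt^*$ is defined as the smallest relation satisfying the closure conditions A1--A4, B1, B2, B3, it suffices to verify that $\rhd^{{\sf HA}}_{{\sf HA}^*,\Sigma_1,\Sigma_1}$ satisfies each of them. Three closures are already in hand: B2 is \Cref{Corollary-B_2}, B3 is \Cref{Corollary-B_3}, and B1 follows from \Cref{Corollary-B_1} because every $\Sigma_1$-substitution $\sigma$ commutes with $\vee$, so pointwise application of the $\Sigma_1$-level corollary at each $\sigma$ does the job.

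For A1--A4, my approach is to translate the statement across the box translation via \Cref{Lemma-Sigma preservativity}(2), reducing to the corresponding closure properties of $\rhd^{{\sf HA}}_{{\sf HA},\Sigma_1,\Sigma_1}$ that are already established in \Cref{Lemma-B_2}. The reduction for A1 uses that $i\K4$ is closed under $(\cdot)^\Box$ by \Cref{Proposition-propositional properties of Box translation}, so $i\K4\vdash A\to B$ yields $i\K4\vdash A^\Box\to B^\Box$, after which A1 at the $\HA$ level finishes the job. The reduction for A4 is immediate because $(\Box A)^\Box=\Box(A^\Box)$ by the definition of the box translation, so closure of $\rhd^{{\sf HA}}_{{\sf HA},\Sigma_1,\Sigma_1}$ under A4 transfers directly to $\HA^*$. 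A2 (transitivity) and A3 (right conjunction) are immediate from the definition of $\rhd_{{\sf HA}^*,\Sigma_1}$ and need no detour through the box translation.

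For the second inclusion $\rhd^{{\sf HA}}_{{\sf HA}^*,\Sigma_1,\Sigma_1}\subseteq\ar^{{\sf HA}}_{{\sf HA}^*,\Sigma_1}$, I would take $A\rhd^{{\sf HA}}_{{\sf HA}^*,\Sigma_1,\Sigma_1}B$ and, for any $\Sigma_1$-substitution $\sigma$, instantiate the inner quantifier over $\Sigma_1$-sentences with the trivially $\Sigma_1$ sentence $\top$. Since $\HA^*\vdash\top\to\sigma_{{\sf HA}^*}(A)$ reduces to $\HA^*\vdash\sigma_{{\sf HA}^*}(A)$, and likewise for $B$, the preservativity hypothesis yields exactly $A\ar_{{\sf HA}^*,\Sigma_1}B$; since every step is carried out inside the formalization, we obtain the $\HA$-formalized version $A\ar^{{\sf HA}}_{{\sf HA}^*,\Sigma_1}B$.

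There is no serious obstacle here: the only concern is bookkeeping to ensure that each step of the argument, especially the translation across $(\cdot)^\Box$, is executed inside the $\HA$-formalization rather than merely at the meta-theoretic level. This is unproblematic because every ingredient invoked (\Cref{Lemma-Sigma preservativity}, \Cref{Lemma-B_2}, \Cref{Corollary-B_1}, \Cref{Corollary-B_2}, \Cref{Corollary-B_3}, and \Cref{Proposition-propositional properties of Box translation}) is already stated in its $\HA$-formalized form.
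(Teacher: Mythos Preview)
Your proposal is correct and follows essentially the same approach as the paper: verify that $\rhd^{{\sf HA}}_{{\sf HA}^*,\Sigma_1,\Sigma_1}$ satisfies A1--A4 and B1--B3, citing \Cref{Corollary-B_1}, \Cref{Corollary-B_2}, \Cref{Corollary-B_3} for the B-clauses and treating the second inclusion as trivial. The paper simply leaves A1--A4 to the reader, whereas you supply an explicit argument via the box translation and \Cref{Lemma-Sigma preservativity}(2); this is a valid (if slightly roundabout) way to handle A1 and A4, though a direct verification from the soundness of $i\K4$ in $\HA^*$ would also suffice.
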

\begin{proof}
The second inclusion is a trivial. We only prove
the first inclusion. We show that $\rhd^{{\sf HA}}_{{\sf
HA}^*,\Sigma_1,\Sigma_1}$ is closed under A1-A4 and
B1-B3. One can observe that $\rhd^{{\sf HA}}_{{\sf
HA}^*,\Sigma_1,\Sigma_1}$ is closed under A1-A4 and we
leave this to the reader. Closure under B1, B2 and B3 is by Corollaries
\ref{Corollary-B_1},\ref{Corollary-B_2} and \ref{Corollary-B_3}, respectively.
\end{proof}

\begin{theorem}\label{Theorem-Soundness for Sigma substitutions PL}
\textup{(\textbf{Soundness})}
$\LCS$ is sound for $\Sigma_1$-arithmetical interpretations in
$\HA^*$, i.e. $\LCS\subseteq\PLS(\HA^*)$.
\end{theorem}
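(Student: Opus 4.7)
The plan is to prove \Cref{Theorem-Soundness for Sigma substitutions PL} by induction on the length of an $\LCS$-derivation, verifying that each axiom schema of $\LCS$ (as listed in \Cref{definition of the main theory}) is sent to a theorem of $\HA^*$ under any $\Sigma_1$-substitution $\sigma$, and that the two inference rules preserve this property. Modus ponens is immediate. For necessitation, if $\HA^*\vdash\sigma_{\HA^*}(A)$, then $\Box_{\HA^*}\sigma_{\HA^*}(A)$ is a true $\Sigma_1$-sentence, hence provable in $\HA$ (by formalized $\Sigma_1$-completeness) and a fortiori in $\HA^*$; this is exactly $\sigma_{\HA^*}(\Box A)$.

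For the axioms of $i\GL$, standard arguments suffice: propositional tautologies of $\IPC_\square$ are trivial; the distribution axiom $\square(A\ra B)\ra(\square A\ra\square B)$ and the transitivity axiom $\square A\ra\square\square A$ follow from the usual formalized properties of $\Prv_{\HA^*}$ inside $\HA$; and L\"ob's axiom holds because $\HA^*$ is a sound r.e.\ extension of $\HA$ to which the formalized L\"ob theorem applies. The completeness principle $\CP$, namely $A\ra\square A$, is validated directly because $\HA^*$ is by definition $\HA+{\sf CP}_{\HA^*}$, so $\sigma_{\HA^*}(A\ra\square A)$ is an instance of the defining schema of $\HA^*$.

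The only axiom scheme requiring nontrivial input is $\square A\ra\square B$ for pairs with $A\brt^* B$. Here we invoke the first inclusion of \Cref{Lemma-Admissible rules of HA^*}, which yields $A\rhd^{\HA}_{\HA^*,\Sigma_1,\Sigma_1}B$. Unfolding this relation for a fixed $\Sigma_1$-substitution $\sigma$ and specializing the universally quantified $\Sigma_1$-witness to the trivial sentence $\top$ gives
\[
\HA\vdash\Box_{\HA^*}\sigma_{\HA^*}(A)\ra\Box_{\HA^*}\sigma_{\HA^*}(B),
\]
which is $\HA\vdash\sigma_{\HA^*}(\square A\ra\square B)$. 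Since $\HA\subseteq\HA^*$ by \Cref{Lemma-Properties of Box translation}.2, we conclude $\HA^*\vdash\sigma_{\HA^*}(\square A\ra\square B)$, as required.

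The genuine obstacle has already been overcome in \Cref{Lemma-Admissible rules of HA^*} itself, whose proof depends on \Cref{Lemma-Sigma preservativity} and the closure properties established in \Cref{Corollary-B_1}, \Cref{Corollary-B_2}, and \Cref{Corollary-B_3}; these in turn lean on the known preservativity results for $\HA$ from \cite{Visser02} combined with the transfer via the box translation. Granting that machinery, the soundness theorem is a short bookkeeping verification collecting the four cases above.
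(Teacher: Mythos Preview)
Your proof is correct and follows essentially the same approach as the paper: induction on the length of an $\LCS$-derivation, with the only nontrivial case being the preservativity axioms $\square A\ra\square B$ for $A\brt^* B$, handled via \Cref{Lemma-Admissible rules of HA^*}. You simply spell out more of the details that the paper leaves implicit, in particular the specialization of the formalized preservativity statement to the $\Sigma_1$-witness $\top$ and the passage from $\HA$ to $\HA^*$.
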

\begin{proof}
We show that for arbitrary $\Sigma$-substitution, $\sigma_{{\sf
HA}^*}$, and for any $A$, if $\LCS\vdash A$, then
$\HA^*\vdash\sigma_{{\sf HA}^*}(A)$. This can be done by
induction on the complexity of  $\LCS\vdash A$. 
All inductive steps clearly holds, except for the axioms $\square A\ra
\square B$ with $A\brt^* B$. This case is a direct consequence of
Lemma \ref{Lemma-Admissible rules of HA^*}.
\end{proof}

\subsection{The Completeness Theorem}
\begin{theorem}\label{Theorem-Sigma_1 Completeness for provability logic}
$\Sigma_1$-arithmetical interpretations in $\HA^*$  are complete
for $\LCS$, i.e. $$\PLS(\HA^*)\subseteq\LCS$$
\end{theorem}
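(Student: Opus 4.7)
The plan is to reduce the completeness of $\LCS$ over $\HA^*$ to the completeness of $\lles$ over $\HA$, i.e.\ to the main tool \Cref{Theorem-Main tool}, using the box translation as a syntactic bridge. Concretely, suppose $\LCS\nvdash A$; we shall produce a $\Sigma_1$-substitution $\sigma$ with $\HA^*\nvdash \sigma_{\HA^*}(A)$, which shows $A\notin \PLS(\HA^*)$.

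First I would verify, by induction on the structure of $A$, that $\LCS\vdash A\lr A^\square$. The atomic case uses that $\LCS$ contains $\CP$, so $p\lr(p\wedge\Box p)$; the $\vee,\wedge$ cases follow from the induction hypothesis; the $\Box$ case follows because $i\K4$ preserves provable equivalences under $\Box$; and the implicative case combines the induction hypothesis with $\CP$ applied to $(A\ra B)$ together with monotonicity of $\Box$. Hence $\LCS\nvdash A$ yields $\LCS\nvdash A^\square$, and then \Cref{Corollary-+algorithm} gives $\LCS\nvdash (A^\square)^-$. Since $\LC\subseteq \LCS$, this in turn gives $\LC\nvdash (A^\square)^-$.

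Next I would invoke the machinery from \cite{Sigma.Prov.HA}. By the very construction of $(\cdot)^-$ (see \Cref{Corollary HA-NNIL properties}), the formula $(A^\square)^-$ lies in $\TNNIL^\Box$. Thus \Cref{Theorem-Main tool} applies to $(A^\square)^-$ and provides an arithmetical $\Sigma_1$-substitution $\sigma$ with $\HA\nvdash \sigma_{\HA}\bigl((A^\square)^-\bigr)$. Using \Cref{Lemma HA-NNIL properties minus} to eliminate the $(\cdot)^-$, we get $\HA\nvdash \sigma_{\HA}(A^\square)$. Finally, \Cref{Lemma-Properties of Box translation 2} translates this back to $\HA^*\nvdash \sigma_{\HA^*}(A)$, completing the argument.

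The only real obstacle is the first step, the equivalence $\LCS\vdash A\lr A^\square$; everything else is an application of lemmas already established. This equivalence is the point where full $\CP$ (as opposed to $\CP_a$) is essential: for the implicative case we need $(A\ra B)\ra \Box(A\ra B)$ in order to recover the $\Box(A^\square\ra B^\square)$ conjunct of $(A\ra B)^\square$ from $A\ra B$ itself. This is precisely the extra strength that $\LCS$ has over $\lles$, and it is exactly what justifies the reduction from $\HA^*$-provability to $\HA$-provability via the box translation.
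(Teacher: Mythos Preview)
Your proof is correct and follows essentially the same route as the paper's: both pass from $\LCS\nvdash A$ to $\LCS\nvdash A^\square$, apply \Cref{Corollary-+algorithm}, descend to $\LC$, invoke \Cref{Theorem-Main tool} on a $\TNNIL^\Box$ formula, and pull back to $\HA^*$ via \Cref{Lemma-Properties of Box translation 2}. The only differences are cosmetic---the paper feeds $(A^\square)^+$ rather than $(A^\square)^-$ into \Cref{Theorem-Main tool} (both lie in $\TNNIL^\Box$, so either works), and you make the equivalence $\LCS\vdash A\lr A^\square$ explicit where the paper simply asserts the step $\LCS\nvdash A\Rightarrow\LCS\nvdash A^\square$ without comment.
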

\begin{proof}
We prove the Completeness Theorem contra-positively. Let
$\LCS\nvdash A(p_1,\ldots,p_n)$. Then $\LCS\nvdash A^\square$ and
hence by Corollary \ref{Corollary-+algorithm}, $\LCS\nvdash
(A^\square)^-$. This, by  
\Cref{Theorem-NNIL Crucial Properties}, implies $\LCS\nvdash ((A^\square)^-)^*$ 
and hence
$\LCS\nvdash (A^\square)^+$, and a fortiori, $\LC\nvdash
(A^\square)^+$. Hence
by Theorem \ref{Theorem-Main tool}, there exists some
$\Sigma_1$-substitution $\sigma$, such that
$\HA\nvdash\sigma_{{\sf HA}}((A^\square)^+)$. Hence by 
\Cref{Lemma-NNIL properties}.1, 
$\HA\nvdash\sigma_{{\sf
HA}}(A^\square)$
 and by Lemma 
\ref{Lemma-Properties of Box translation 2}, 
$\HA^*\nvdash\sigma_{{\sf HA}^*}(A)$.
\end{proof}

\begin{corollary}\label{Corollary-lles-LCS}
For any modal proposition $A$, 
$\LCS\vdash A$ iff $\lles\vdash A^\Box$.
\end{corollary}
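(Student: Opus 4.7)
The plan is to derive the corollary as a direct chain of equivalences, with no new combinatorial work: every ingredient is already in place. The statement of the corollary is a clean ``arithmetical-to-modal'' transfer, and the hard arithmetical content has been absorbed into the soundness theorem (\Cref{Theorem-Soundness for Sigma substitutions PL}), the completeness theorem (\Cref{Theorem-Sigma_1 Completeness for provability logic}), and \Cref{Lemma-Properties of Box translation 2}.

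First I would chain the two direction-specific results for $\LCS$. By \Cref{Theorem-Soundness for Sigma substitutions PL} and \Cref{Theorem-Sigma_1 Completeness for provability logic},
\[
\LCS\vdash A\ \Longleftrightarrow\ A\in\PLS(\HA^*)\ \Longleftrightarrow\ \text{for every }\Sigma_1\text{-substitution }\sigma,\ \HA^*\vdash\sigma_{\HA^*}(A).
\]
Next I would apply \Cref{Lemma-Properties of Box translation 2} uniformly in $\sigma$: the equivalence $\HA\vdash\sigma_{\HA}(A^\Box)\Longleftrightarrow\HA^*\vdash\sigma_{\HA^*}(A)$ holds for every single $\Sigma_1$-substitution $\sigma$, hence it also holds when we quantify universally on both sides. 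This turns the last condition into: for every $\Sigma_1$-substitution $\sigma$, $\HA\vdash\sigma_{\HA}(A^\Box)$, i.e.\ $A^\Box\in\PLS(\HA)$.

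Finally I would invoke the characterization of the $\Sigma_1$-provability logic of $\HA$ from \cite{Sigma.Prov.HA} (cited just after \Cref{definition of the main theory}), namely $\PLS(\HA)=\lles$, to convert the last condition into $\lles\vdash A^\Box$. Putting the three steps together yields $\LCS\vdash A$ iff $\lles\vdash A^\Box$, which is the corollary.

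There is no real obstacle; the only point that deserves care is that \Cref{Lemma-Properties of Box translation 2} is stated for a fixed $\sigma$, so one must be explicit that the equivalence is uniform in $\sigma$ before taking the universal quantifier. Otherwise the proof is a two-line chain of biconditionals, which also explains the paper's earlier remark that the characterization ``$A\in\PLS(\HA^*)$ iff $A^\Box\in\PLS(\HA)$'' was already implicit; the corollary simply records its syntactic form after both provability logics have been axiomatized.
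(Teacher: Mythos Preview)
Your argument is correct and matches the paper's proof exactly: the paper simply cites \Cref{Theorem-Soundness for Sigma substitutions PL}, \Cref{Theorem-Sigma_1 Completeness for provability logic}, and \Cref{Lemma-Properties of Box translation 2}, and you have spelled out precisely the chain of biconditionals these three results (together with the characterization $\PLS(\HA)=\lles$ recalled after \Cref{definition of the main theory}) yield.
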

\begin{proof}
By Theorems \ref{Theorem-Soundness for Sigma substitutions PL} and \ref{Theorem-Sigma_1 Completeness for provability logic} and \Cref{Lemma-Properties of Box translation 2}.
\end{proof}

\begin{corollary}\label{Corollary-decidability-of-LCS}
$\LCS$ is  decidable. 
\end{corollary}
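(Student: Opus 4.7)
The plan is to reduce decidability of $\LCS$ to decidability of $\lles$, using the equivalence just established in Corollary \ref{Corollary-lles-LCS}. That corollary states that for every modal proposition $A$,
\[
\LCS\vdash A \quad\Longleftrightarrow\quad \lles\vdash A^{\Box}.
\]
So to decide whether $\LCS\vdash A$, it suffices to (i) compute $A^{\Box}$ and (ii) decide whether $\lles\vdash A^{\Box}$.

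For step (i), note that the box translation is given by a straightforward structural recursion on $A$ in Definition \ref{Definition-Box translation}: each clause produces its output from the box translations of the immediate subformulae using only propositional connectives and a single $\Box$. Hence $A\mapsto A^{\Box}$ is a primitive recursive (indeed, easily implementable) map on modal formulae, so $A^{\Box}$ is effectively computable from $A$.

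For step (ii), the theory $\lles$ was shown to be decidable in the authors' earlier work \cite{Sigma.Prov.HA}, where it is characterised as the $\Sigma_1$-provability logic of $\HA$ together with an explicit decision procedure. Composing the computable map $A\mapsto A^{\Box}$ with the decision procedure for $\lles$ yields a decision procedure for $\LCS$, which completes the argument. There is no real obstacle here; the whole content is that Corollary \ref{Corollary-lles-LCS} has transferred the decidability problem for $\LCS$ wholesale to the already-decidable system $\lles$, and the translation witnessing the transfer is itself computable.
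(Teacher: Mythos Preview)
Your argument is correct and matches one of the two proofs the paper itself sketches: reduce to $\lles$ via Corollary~\ref{Corollary-lles-LCS}, observe that $A\mapsto A^{\Box}$ is effectively computable from Definition~\ref{Definition-Box translation}, and invoke the decidability of $\lles$ from \cite{Sigma.Prov.HA}. The paper also mentions an alternative route by direct inspection of the completeness proof, but your approach is exactly the second option they indicate.
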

\begin{proof}
A proof can be given either with inspections in the proof of the Completeness Theorem (\ref{Theorem-Sigma_1 Completeness for provability logic}) or by using the decidability of 
$\lles$ \cite{Sigma.Prov.HA} and \Cref{Corollary-lles-LCS}.
\end{proof}
\subsection*{Open problems}
\begin{enumerate}
\item  
The statement of 
\Cref{Corollary-lles-LCS} is {\em purely propositional}. However, our proof of this corollary is based on 
\Cref{Theorem-Sigma_1 Completeness for provability logic}, that has  {\em arithmetical} theme. A  tempting problem is 
to find  a {\em direct propositional proof} for this corollary. Then we can derive \Cref{Theorem-Sigma_1 Completeness for provability logic}.

\item We conjecture that the full provability logic of $\HA^*$ is the logic ${\sf iH}^*$, 
axiomatized as follows  
$${\sf iH}^*:=\iGL+\CP+\{ \Box A\to \Box B : A\brt_\alpha^* B\},$$
in which the relation $\brt^*_\alpha$ is defined as the smallest relation satisfying:
\begin{itemize}
 \item A1. If $i\K4\vdash A\ra B$, then $A\brt_\alpha^* B$,
 \item A2. If $A\brt_\alpha^* B$ and $B\brt_\alpha^* C$, then $A\brt_\alpha^* C$,
 \item A3. If $C\brt_\alpha^* A$ and $C\brt_\alpha^* B$, then $C\brt_\alpha^* A\wedge B$,
 \item A4. If $A\brt_\alpha^* B$, then $\bo A\brt_\alpha^* \bo B$,
 \item B1. If $A\brt_\alpha^* C$ and $B\brt_\alpha^* C$, then $A\vee B\brt_\alpha^* C$,
 \item B2. Let $X$ be a set of implications, $B:=\bigwedge X$ and $A:=B\ra C$.
 Also assume that $Z:=\{E | E\ra F\in X\}\cup \{C\}$. 
 Then $A\wedge\square B\brt_\alpha^* \{B\}Z $,
 \item B3. If $A\brt_\alpha^* B$, then $\Box C\ra A\brt_\alpha^* \Box C\ra B$.
\end{itemize}
The notation $ \{A\} (B)$, for modal propositions $A$ and $B$, 
  is defined inductively:
\begin{itemize}
\item $\{A\}(\Box B)=\Box B$ and $\{A\}(\bot)=\bot$.
\item $\{A\}(B_1\circ B_2) = \{A\}(B_1)\circ \{A\}(B_2)$, 
for $\circ\in\{\vee,\wedge\}$,
\item $\{A\}(B) = A\ra B$ for all of the other cases, i.e. when $B$ is atomic variable or implication.
\end{itemize}
And  $\{A\}\Gamma$,  for a set $\Gamma$ of modal propositions, is defined as 
$\bigvee_{B\in \Gamma} \{A\}(B)$.
\end{enumerate}

\providecommand{\bysame}{\leavevmode\hbox to3em{\hrulefill}\thinspace}
\providecommand{\MR}{\relax\ifhmode\unskip\space\fi MR }
\providecommand{\MRhref}[2]{%
  \href{http://www.ams.org/mathscinet-getitem?mr=#1}{#2}
}
\providecommand{\href}[2]{#2}

\end{document}